\documentclass{article}
\usepackage{fullpage}
\usepackage{amsfonts,amssymb,amsmath,amsthm,dsfont}
\usepackage{graphicx}
\usepackage{comment}
\usepackage[round]{natbib}
\usepackage[colorlinks=true,citecolor=blue]{hyperref}
\usepackage{mdwlist} % pour avoir un environnement itemize* plus compacte

\newtheorem{theorem}{Theorem}
\newtheorem{lemma}[theorem]{Lemma}

\newcommand{\proba}{\mathds{P}}
\newcommand{\bigO}{O}

\newcommand{\reals}{\mathds{R}}
\newcommand{\naturals}{\mathds{Z}_{\geq 0}}
\newcommand{\complex}{\mathds{C}}

\newcommand{\orderings}{{\sf orderings}}
\newcommand{\mgdnm}{\operatorname{MG}^{(\DD)}_{n,m}}
\newcommand{\sgdnm}{\operatorname{SG}^{(\DD)}_{n,m}}
\newcommand{\mgnm}{\operatorname{MG}_{n,m}}
\newcommand{\sgnm}{\operatorname{SG}_{n,m}}

\newcommand{\starmgdnm}{\operatorname{MG}^{(\DD, *)}_{n,m}}
\newcommand{\zeromgdnm}{\operatorname{MG}^{(\DD, 0)}_{n,m}}
\newcommand{\DD}{\mathcal{D}}
\newcommand{\set}{\operatorname{Set}}
\newcommand{\FF}{\mathcal{F}}
\newcommand{\marked}{\operatorname{Marked}}
\newcommand{\weight}{\operatorname{Marked}}

\newcommand{\WW}{\tilde{W}}

\title{Graphs with degree constraints}
\author{\'Elie de Panafieu \footnote{This work was partially founded by the Austrian Science Fund (FWF) grant F5004, the Amadeus program and the PEPS HYDrATA.}\\ \\
Johannes Kepler University \\ \\
depanafieuelie@gmail.com  \and Lander Ramos
\footnote{This work was supported by Spain’s Ministerio de Ciencia e Innovación under the project ``Combinatoria, Teoría de Grafos y Geometría Discreta'' (ref. MTM2011-24097)}\\ \\
Universitat Polit\`ecnica de Catalunya \\ \\
lander.ramos@upc.edu
}

\begin{document}
\maketitle

%\begin{abstract}
%Families of graphs with degrees constrained to belong 
%to a given set appear often in the literature.
%Euler graphs, regular graphs, graphs with a lower bound on the degrees 
%are the most common examples.
%%
%Using analytic combinatorics, 
%we derive their asymptotics
%when the ratio between the numbers of edges and vertices is bounded.
%This result is obtained both for multigraphs,
%where loops and multiple edges are allowed,
%and for simple graphs.
%
%\textbf{Keywords:}
%graph enumeration, degree constraints, generating function, analytic combinatorics.
%\end{abstract}

\begin{abstract}
Given a set~$\DD$ of nonnegative integers,
we derive the asymptotic number of graphs
with a given number of vertices, edges,
and such that the degree of every vertex is in $\DD$.
This generalizes existing results,
such as the enumeration of graphs with a given minimum degree,
and establishes new ones,
such as the enumeration of Euler graphs, 
\textit{i.e.} where all vertices have an even degree.
Those results are derived using analytic combinatorics.
\end{abstract}

%%%%%%%%%%%%%%%%%%%%%%%%%%%%%%%%%%%%%%%%%%%%%%%%%%%%%%%%%%%
\section{Introduction}
%%%%%%%%%%%%%%%%%%%%%%%%%%%%%%%%%%%%%%%%%%%%%%%%%%%%%%%%%%%

\subsection{Related works}

The asymptotics of several families of simple graphs 
with degree constraints have been derived.
Regular graphs, where all vertices have the same degree,
have been enumerated by \cite{BC78},
graphs with minimum degree at least~$\delta$ by \cite{PW03}.
An \emph{Euler graph}, or \emph{even graph},
is a graph where all vertices have an even degree.
An exact formula for the number of such graphs,
for a given number of vertices and
without consideration of the number of edges, 
has been derived by \cite{RWR69} and \cite{MS75}.
In the present work, we generalize those results
and derive the asymptotic number of graphs
with degrees in any given set.

A similar problem has been addressed with probabilistic tools
by the \emph{configuration model}, introduced independently
by \cite{B80} and \cite{Wo78}.
This model inputs a distribution~$F$ on the degrees,
and outputs a random multigraph 
where the degree of each vertex follows~$F$.
The main difference with the model analyzed in this article
is that the number of edges in the configuration model
is a random variable.
The link between both models is discussed in Section~\ref{sec:boltzmann}.
For more information on the configuration model,
we recommend the book of \cite{Ho14}.

Other related problems include the enumeration of graphs 
with a given degree sequence (\cite{BC78}),
the enumeration of symmetric matrices 
with nonnegative coefficients and constant row sum (\cite{CMS05}),
and the enumeration of graphs with degree parities,
investigated by~\cite{RR82}.

\subsection{Model and notations}

A \emph{multiset} is an unordered collection of objects,
where repetitions are allowed.
Sets are then multisets without repetitions.
A \emph{sequence} is an ordered multiset.
We use the parenthesis notation~$(u_1, \ldots, u_n)$
for sequences, and the brace notation $\{u_1, \ldots, u_n\}$ for sets and multisets.
Open real intervals are denoted by open square brackets~$]a,b[$.

A \emph{simple graph}~$G$ is a set~$V(G)$ of labelled vertices 
and a set~$E(G)$ of edges, 
where each edge is an unordered pair of distinct vertices.
In a \emph{multigraph}, the edges form a multiset
and the vertices in an edge need not be distinct.
An edge~$\{v,w\}$ is a \emph{loop} if $v = w$,
a \emph{multiple edge} if it has at least two occurrences in the multiset of edges,
and a \emph{simple edge} otherwise.
Thus, the simple graphs are the multigraphs 
that contain neither loops nor multiple edges,
\text{i.e.} that contain only simple edges.
The set of multigraphs with~$n$ vertices and~$m$ edges
is denoted by~$\mgnm$,
and the subset of simple graphs by~$\sgnm$.

The \emph{degree} of a vertex is defined as its number of occurrences in $E(G)$.
In particular, a loop increases its degree by~$2$.
The set of multigraphs from~$\mgnm$ 
where each vertex has its degree in a set~$\DD$
is denoted by $\mgdnm$.
The subset of simple graphs is~$\sgdnm$.
The set $\DD$ may be finite or infinite.
We denote its generating function by
\[
    \set_{\DD}(x) = \sum_{d \in \DD} \frac{x^d}{d!}.
\]
For any natural number~$i$, $\DD-i$ denotes the set
$\{d-i \in \naturals\ |\ d \in \DD\}$.
In particular, observe that $\set_{\DD}'(x) = \set_{\DD-1}(x)$.
We also define the \emph{valuation} $r = \min(\DD)$ 
and \emph{periodicity} $p = \gcd \{d_1-d_2\ |\ d_1,d_2 \in \DD\}$ of the set~$\DD$
(by convention, the periodicity is infinite when~$|\DD| = 1$).

%%%%%%%%%%%%%%%%%%%%%%%%%%%%%%%%%%%%%%%%%%%%%%%%%%%%%%%%%%%
\section{Main Theorem and applications}
%%%%%%%%%%%%%%%%%%%%%%%%%%%%%%%%%%%%%%%%%%%%%%%%%%%%%%%%%%%

Our main result is an asymptotic expression
for the number of graphs in~$\sgdnm$,
when the number~$m$ of edges grows linearly 
with the number~$n$ of vertices.

\begin{theorem} \label{th:simple}
Assume~$\DD$ contains at least two integers,
has valuation~$r = \min \{d \in \DD\}$ 
and periodicity~$p = \gcd \{ d_1 - d_2\ |\ d_1, d_2 \in \DD\}$.
Let~$m$, $n$ denote two integers tending to infinity,
such that $2m/n$ stays in a fixed compact interval of $]r,\max(\DD)[$
and~$p$ divides $2m - r n$, then
the number of simple graphs in $\sgdnm$ is
\[
	\frac{(2m)!}{2^m m!}
    \frac{\set_{\DD}(\zeta)^n}{\zeta^{2m}}
    \frac{p}{\sqrt{2 \pi n \zeta \phi'(\zeta)}}
    e^{- W_{\frac{n}{m}}(\zeta)^2 - W_{\frac{n}{m}}(\zeta)}
    (1 + \bigO(n^{-1})),
\]
where $\phi(x) = \frac{x \set_{\DD-1}(x)}{\set_{\DD}(x)}$,
$\zeta$ is the unique positive solution of
$\phi(\zeta) = \frac{2m}{n}$,
and $W_{\frac{n}{m}}(x) = \frac{n}{4m} \frac{x^2 \set_{\DD-2}(x)}{\set_{\DD}(x)}$.
If~$p$ does not divide $2m - r n$, if $2m/n < r$ or if $2m/n > \max(\DD)$,
then $\sgdnm$ is empty.
\end{theorem}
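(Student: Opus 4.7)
The plan is the classical analytic-combinatorics route for degree-constrained simple graphs: encode multigraphs in the pairing (configuration) model, extract the resulting coefficient by saddle point, and convert to simple graphs by a Poisson-style correction that kills loops and multiple edges. For the first step, a vertex whose degree lies in $\DD$ and whose half-edges are labelled has EGF $\set_\DD(x)$ in the half-edge variable, so $n$ labelled vertices contribute $\set_\DD(x)^n$. Pairing the $2m$ half-edges in $(2m)!/(2^m m!)$ possible ways produces every $G\in\mgdnm$ weighted by its automorphism factor $1/(2^{\ell(G)}\prod_e\mu_e(G)!)$, yielding the exact identity
\[
\sum_{G\in\mgdnm}\frac{1}{2^{\ell(G)}\prod_e\mu_e(G)!} \;=\; \frac{(2m)!}{2^m m!}\,[x^{2m}]\set_\DD(x)^n,
\]
the clean point of departure.

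Next I would extract $[x^{2m}]\set_\DD(x)^n$ by the saddle-point method on a circle $|x|=\zeta$. The saddle equation $\phi(\zeta)=2m/n$ has a unique positive solution under the compactness hypothesis, with $\phi'(\zeta)>0$; a standard Cauchy-integral plus Gaussian-tails computation produces the factor $\set_\DD(\zeta)^n\zeta^{-2m}/\sqrt{2\pi n\zeta\phi'(\zeta)}$. The periodicity $p$ enters because $\set_\DD(x)^n$ is supported on exponents of the form $rn+p\,\naturals$: the $p$ saddles of equal magnitude on $|x|=\zeta$ add constructively when $p\mid 2m-rn$ (giving the factor $p$) and cancel otherwise, producing the empty-case statement.

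The final step transfers from compensated multigraphs to simple graphs, proving
\[
|\sgdnm| \;=\; |\mmgdnm|\;e^{-W-W^2}\bigl(1+\bigO(n^{-1})\bigr),
\]
with $W=W_{\frac{n}{m}}(\zeta)$. Heuristically, in a uniform random compensated multigraph the numbers of loops $L$ and of parallel-edge pairs $M$ are asymptotically independent Poisson with means $W$ and $W^2$, and a simple graph is precisely the event $\{L=M=0\}$ (which automatically forces the compensation factor to $1$). The loop parameter is read off by averaging $\sum_v\binom{d_v}{2}/(2m-1)$ against the saddle-point degree distribution $d_v\sim(x^d/d!)/\set_\DD(x)$ at $x=\zeta$, collapsing to $W=\frac{n\zeta^2\set_{\DD-2}(\zeta)}{4m\,\set_\DD(\zeta)}$; the analogous calculation at pairs of half-edges at distinct vertices produces $W^2$ for the double edges.

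The hardest part will be this last step: upgrading the Poisson heuristic to a sharp asymptotic with relative error $\bigO(n^{-1})$, uniformly as $2m/n$ ranges over the given compact interval of $\,]r,\max(\DD)[$. This amounts either to controlling the joint factorial moments of $(L,M)$ to sufficiently high order, or equivalently to performing an inclusion-exclusion on ``bad'' sub-configurations of the half-edge matching whose tail must be bounded uniformly in $\zeta$. A secondary but delicate matter is tracking the error terms through the saddle-point expansion of the second step, especially their uniformity as $\zeta$ approaches the boundary of the admissible range, and combining them cleanly with the exact identity of the first step.
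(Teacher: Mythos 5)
Your outline follows the paper's route step for step: the exact identity $\sum_{G\in\mgdnm}\kappa(G)=\frac{(2m)!}{2^m m!}[x^{2m}]\set_{\DD}(x)^n$ is Theorem~\ref{th:multigraphs} there, the saddle-point extraction with the factor $p$ coming from the $p$ aligned saddles is Lemma~\ref{th:large_power} and Theorem~\ref{th:multigraphs_asymptotics}, and the correction factor $e^{-W_{\frac{n}{m}}(\zeta)^2-W_{\frac{n}{m}}(\zeta)}$ is exactly what the paper obtains, with the same identification of $W_{\frac{n}{m}}(\zeta)$ as the mean number of loops under the saddle-point degree distribution. The problem is that your third step --- the only genuinely hard one, as you acknowledge --- is left as a heuristic with two candidate strategies, neither of which is carried out; this is where essentially all of the paper's work lies, and the obstacles are concrete rather than routine.

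Two specific gaps. First, an inclusion-exclusion that marks \emph{all} loops and multiple edges does not admit a clean symbolic description, because marked non-simple edges overlap in many ways (double loops, triple edges, two double edges sharing a vertex, a loop meeting a double edge). The paper circumvents this by marking only isolated loops and isolated double edges, i.e.\ by splitting $\mgdnm=\starmgdnm\uplus\zeromgdnm$, proving $|\sgdnm|=\marked_{\starmgdnm}(-1,-1)$ (Lemma~\ref{th:inclusion_exclusion}), and showing separately via a mandatory-subgraph bound (Lemmas~\ref{th:mandatory_edges} and~\ref{th:large_excess}) that the contribution of $\zeromgdnm$ is $\bigO(n^{-1})$ of the total; your phrase ``inclusion-exclusion on bad sub-configurations'' names this issue without supplying the decomposition that makes it tractable. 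Second, the alternating sum produced by evaluating at $(u,v)=(-1,-1)$ cannot be controlled term by term by positivity, since its individual terms are enormous; the paper's Lemma~\ref{th:technical} constructs an entire majorant $\frac1n C(u,v)$ dominating \emph{coefficient-wise} the difference between $e^{u+v}$ and the exact sum $\sum_{k,\ell}a_{n,m,2k+\ell}\frac{u^k}{k!}\frac{v^{\ell}}{\ell!}$, so that the saddle-point bound of Lemma~\ref{th:large_power} can be applied to the majorant rather than to the signed error. Some such device is unavoidable if you want a relative error $\bigO(n^{-1})$ rather than $\smallo(1)$, and a joint-factorial-moment argument would face the same uniformity problem in disguise. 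In short: right skeleton, same approach as the paper, but the proof of the central step is missing rather than merely deferred to routine estimates.
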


When $\DD = \naturals$, the degrees are not constrained, so $\sgdnm = \sgnm$.
Using Stirling formula, it can indeed be checked that~$\binom{\binom{n}{2}}{m}$, 
the total number of simple graphs with~$n$ vertices and~$m$ edges,
is asymptotically equal to the result of Theorem~\ref{th:simple}
\[    
    \frac{n^{2m}}{2^m m!}
    \frac{(2m)!}{(2m)^{2m} e^{-2m} \sqrt{2 \pi 2m}}
    e^{-\left(\frac{m}{n}\right)^2 - \frac{m}{n}}
    \left( 1 + \bigO(n^{-1}) \right).
\]

\cite{PW03} have derived the asymptotics
of simple graphs with minimum degree at least~$\delta$.
They used probabilitic and analytic elementary tools,
in a sophisticated way.
In the present paper, we have addressed the enumeration
of a broader family of graphs with degree constraints,
using more powerful tools (analytic combinatorics).
For graphs with minimum degree at least~$\delta$,
the asymptotics derived in Theorem~\ref{th:simple}, 
for $\DD = \mathds{Z}_{\geq \delta}$,
matches their result.

Euler graphs are simple graphs where each vertex has an even degree.
An exact, but complicated, formula for the number of such graphs,
for given number of vertices and
without consideration of the number of edges, 
has been derived by \cite{RWR69} and \cite{MS75}.
Applying Theorem~\ref{th:simple}, we are now able to derive
the asymptotic number of Euler graphs with~$n$ vertices and~$m$ edges,
when~$2m/n$ stays in a fixed compact interval of~$\reals_{>0}$
\[
    |\operatorname{SG}^{(\text{even})}_{n,m}|
    =
    \frac{(2m)!}{2^m m!}
    \frac{\cosh(\zeta)^n}{\zeta^{2m}}
    \frac{2}{\sqrt{2 \pi n \zeta \phi'(\zeta)}}
    e^{- \left( \frac{n}{4m} \zeta^2 \right)^2 - \frac{n}{4m} \zeta^2}
    (1 + \bigO(n^{-1})),
\]
where~$\phi(x) = x \tanh(x)$ and~$\tanh(\zeta) = 2m/n$.

%%%%%%%%%%%%%%%%%%%%%%%%%%%%%%%%%%%%%%%%%%%%%%%%%%%%%%%%%%%
\section{Proof of the result}
%%%%%%%%%%%%%%%%%%%%%%%%%%%%%%%%%%%%%%%%%%%%%%%%%%%%%%%%%%%

In this section we provide a proof for
Theorem~\ref{th:simple}. The proof of all lemmas and theorems
are moved to the appendix.

    \subsection{Preliminaries} \label{sec:model}

\subsubsection{Multigraph model}

The main model of random multigraphs with~$n$ vertices and~$m$ edges
is the \emph{multigraph process}, analyzed by \cite{FKP89} and \cite{JKLP93}.
It samples uniformly and independently~$2m$ vertices 
$(v_1, v_2, \ldots, v_{2m})$ in $\{1, \ldots, n\}$,
and outputs a multigraph with set of vertices $\{1, \ldots, n\}$
and set of edges
\[
	\{ \{v_{2i-1}, v_{2i}\}\ |\ 1 \leq i \leq m\}.
\]

Given a simple or multi graph, one can order the set of edges and the vertices in each edge.
The result is a sequence of ordered pairs of vertices, that we call an \emph{ordering} of~$G$.
Let $\orderings(G)$ denote the number of such orderings.
For example, the multigraph on~$2$ vertices with edges~$\{\{1,1\},\{1,2\},\{1,2\}\}$
has~$12$ orderings, amongst them~$((1,2),(1,1),(2,1))$.
For simple graphs, the number of orderings is equal to~$2^m m!$, 
because each edge has two possible orientations
and all edges can be permuted.
For non-simple multigraphs, $\orderings$ is smaller.
\cite{FKP89} and \cite{JKLP93} introduced the \emph{compensation factor}~$\kappa(G)$ 
of a multigraph~$G$ with $m$ edges, defined as
\[
    \kappa(G) = \frac{\orderings(G)}{2^m m!}.
\]
The compensation factor of a multigraph is~$1$ 
if and only if it is simple.

Observe that in the random distribution
induced by the multigraph process, 
each multigraph receives a probability
proportional to its compensation factor.
Therefore, when the output of the multigraph process
is constrained to be a simple graph,
the sampling becomes uniform on~$\sgnm$.
The \emph{total weight} of a family~$\FF$ of multigraphs is the sum
of their compensation factors.
For example, the total weight of~$\mgnm$ is equal to~$\frac{n^{2m}}{2^m m!}$.
When~$\FF$ contains only simple graphs, its total weight
is equal to its cardinality.

    \subsubsection{Analytic tools} \label{sec:notations}

Our tool for the analysis of graphs with degree constraints
is \emph{analytic combinatorics}, as presented by \cite{FS09}.
Its principle is to associate to the combinatorial family studied
its \emph{generating function}. The asymptotics of the family
is then linked to the analytic behavior of this function.

In the analysis of a graphs family~$\mathcal{F}$
with analytic combinatorics,
the main difficulty is the fast growth of its cardinality,
which often implies a zero radius of convergence
for the corresponding generating function
\[
    \sum_{G \in \mathcal{F}}
    w^{|E(G)|}
    \frac{z^{|V(G)|}}{|V(G)|!}.
\]
This feature drastically reduces the number of tools 
from complex analysis that can be applied.
% to analyze the generating function.
%
Graphs with degree constraints are no exception,
but our approach completely avoid this classic issue.
In fact, the only analytic tool we use
is the following lemma,
a variant of \cite[Theorem VIII.8]{FS09}.

\begin{lemma} \label{th:large_power}
Consider a non-monomial series~$B(z)$ with nonnegative coefficients, 
analytic on~$\complex$,
with valuation $r = \min\{n\ |\ [z^n] B(z) \neq 0\}$
and periodicity $p = \gcd\{n\ |\ [z^{n-r}] B(z) \neq 0\}$.
Let~$\phi(z)$ denote the function $\frac{z B'(z)}{B(z)}$,
and $K$ a compact interval of the open interval $]r, \lim_{x \to \infty} \phi(x)[$.
Let~$N$, $n$ denote two integers tending to infinity
while~$N/n$ stays in~$K$,
and let~$\zeta$ denote the unique positive solution of~$\phi(\zeta) = N/n$.
Finally, consider a compact~$Y$ and a function~$A(y,z)$, $\mathcal{C}^2$ on $Y \times \complex$,
such that for all~$y$ in~$Y$, the function~$z \mapsto A(y,z)$ is analytic on~$\complex$
and~$A(y, \zeta^p)$ is nonzero.
Then we have, uniformly for~$N/n$ in~$K$ and~$y$ in~$Y$,
\[
    [z^N]
    A(y, z^p)
    B(z)^n
    =
    \begin{cases}
    \frac{p A(y, \zeta^p)}{\sqrt{2 \pi n \zeta \phi'(\zeta)}}
    \frac{B(\zeta)^n}{\zeta^{N}}
    \left(1 + \bigO(n^{-1}) \right)
    & \text{ if $p$ divides $N - n r$},
    \\
    0
    & \text{ otherwise}.
    \end{cases}
\]
\end{lemma}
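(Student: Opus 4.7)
The plan is to apply the saddle-point method to Cauchy's coefficient formula, carefully handling the $p$-fold symmetry coming from the periodicity of~$B$.

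\textbf{Divisibility obstruction.} Since~$B$ has valuation~$r$ and its support shifted by~$r$ has $\gcd$ equal to~$p$, we can write $B(z) = z^r \tilde{B}(z^p)$ where $\tilde{B}$ is entire with $\tilde{B}(0) > 0$. Then $A(y,z^p) B(z)^n = z^{nr} A(y,z^p) \tilde{B}(z^p)^n$, so all nonzero coefficients sit at exponents congruent to~$nr$ modulo~$p$. This settles the ``otherwise'' case.

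\textbf{Contour integral and $p$-fold symmetry.} In the main case, apply Cauchy's formula on the circle $|z| = \zeta$:
\[
[z^N] A(y,z^p) B(z)^n = \frac{1}{2\pi} \int_{-\pi}^{\pi} A(y, \zeta^p e^{ip\theta})\, B(\zeta e^{i\theta})^n\, \zeta^{-N} e^{-iN\theta}\, d\theta.
\]
The function $\theta \mapsto |B(\zeta e^{i\theta})|$ attains its maximum on $[-\pi,\pi]$ exactly at the $p$ points $\theta_k = 2\pi k/p$. From $B(ze^{2\pi i/p}) = e^{2\pi i r/p} B(z)$ and the $p$-periodicity in $\arg z$ of $A(y,z^p)$, the integrand at $\theta + 2\pi k/p$ equals the integrand at $\theta$ multiplied by $e^{2\pi i k (nr - N)/p}$. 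Summing these phases over~$k$ again gives a factor~$p$ when $p \mid N - nr$ and~$0$ otherwise, reducing the problem to $p$ times the contribution of a single arc centered at $\theta = 0$.

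\textbf{Local saddle-point analysis.} On a shrinking arc $|\theta| \leq n^{-2/5}$ (the precise exponent is not essential), Taylor-expand $f(\theta) := \log B(\zeta e^{i\theta})$. A direct computation gives $f'(0) = i\phi(\zeta)$ and $f''(0) = -\zeta \phi'(\zeta)$, so using the saddle condition $\phi(\zeta) = N/n$ the linear phase cancels and
\[
n\, f(\theta) - iN\theta = n \log B(\zeta) - \tfrac{n}{2} \zeta \phi'(\zeta) \theta^2 + O(n \theta^3).
\]
The $\mathcal{C}^2$ regularity of~$A$ yields $A(y, \zeta^p e^{ip\theta}) = A(y,\zeta^p) + O(\theta)$ uniformly in $y \in Y$. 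Extending the resulting Gaussian integral to~$\reals$ introduces only exponentially small error, giving at leading order
\[
\frac{p\, A(y,\zeta^p)}{\sqrt{2\pi n \zeta \phi'(\zeta)}} \frac{B(\zeta)^n}{\zeta^N}.
\]
Carrying the expansion of~$f$ one order further and using parity (odd Gaussian moments vanish) upgrades the relative error from $O(n^{-1/2})$ to the claimed $O(n^{-1})$.

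\textbf{Main obstacle.} The hard part is the tail bound together with uniformity. One must show that outside the shrinking arcs around each~$\theta_k$, the integrand is exponentially smaller than the main term, uniformly in $N/n \in K$ and $y \in Y$. This rests on the strict concavity of $\theta \mapsto \log |B(\zeta e^{i\theta})|$ at each peak (equivalent to $\phi'(\zeta) > 0$, which holds throughout $]r, \lim_{x\to\infty} \phi(x)[$), continuity of~$\zeta$ as a function of $N/n$, and compactness of~$K$ and~$Y$ to make every implicit constant uniform. The $p$-fold symmetry argument, though conceptually clean, also requires some care to align with the arcs of the saddle-point estimate.
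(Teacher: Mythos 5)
Your proposal is correct and follows exactly the standard saddle-point proof of the large-powers theorem: the paper itself supplies no proof of this lemma, deferring to Theorem VIII.8 of Flajolet and Sedgewick, and your argument (factoring $B(z)=z^r\tilde{B}(z^p)$ for the divisibility obstruction, summing the $p$ aligned saddle points, local Gaussian expansion with the parity argument for the $O(n^{-1})$ error, and uniform tail bounds via compactness of $K$ and $Y$) is precisely the argument behind that theorem, adapted to the periodicity and the auxiliary factor $A(y,z^p)$. You correctly identify the tail estimate and uniformity as the only points requiring real care, and the ingredients you list ($\phi'(\zeta)>0$ on the relevant range, strict maximality of $|B(\zeta e^{i\theta})|$ only at $\theta\equiv 0 \bmod 2\pi/p$, continuity of $\zeta$ in $N/n$) are the right ones.
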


%%%%%%%%%%%%%%%%%%%%%%%%%%%%%%%%%%%%%%%%%%%%%%%%%%%%%%%%%%%
\subsection{Multigraphs with degree constraints}
%%%%%%%%%%%%%%%%%%%%%%%%%%%%%%%%%%%%%%%%%%%%%%%%%%%%%%%%%%%

The work of \cite{FKP89} and \cite{JKLP93}
demonstrates that multigraphs are more suitable
to the analytic combinatorics approach than simple graphs.
Moreover, the results on multigraphs can usually
be extended to simple graphs.
Following this observation, multigraphs are analyzed in this section,
before turning so simple graphs in Section~\ref{sec:simple}.
%The enumeration of multigraphs is carried out in Section~\ref{sec:
%The behavior of non-simple edges in typical multigraphs
%with degree constraints is describe in Section~\ref{sec:typical}.

    \subsubsection{Exact and asymptotic enumeration}

We derive an exact expression for the number of multigraphs with degree constraints
in Theorem~\ref{th:multigraphs},
then translates it into an asymptotics
in Theorem~\ref{th:multigraphs_asymptotics}.

\begin{theorem} \label{th:multigraphs}
The total weight of all multigraphs in $\mgdnm$ is
\[
    \sum_{G \in \mgdnm} \kappa(G) 
    =
    \frac{(2m)!}{2^m m!} 
    [x^{2m}] 
    \set_{\DD}(x)^n.
\]
\end{theorem}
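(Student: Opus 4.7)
The plan is to unfold the definition of $\kappa(G)$ and reduce the total weight to a direct counting of vertex sequences, which is then read off from the exponential generating function $\set_{\DD}$.

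First I would rewrite
\[
    \sum_{G \in \mgdnm} \kappa(G)
    = \frac{1}{2^m m!} \sum_{G \in \mgdnm} \orderings(G).
\]
The next step, which is the real content of the lemma, is to identify $\sum_{G \in \mgdnm} \orderings(G)$ with the number of sequences $(v_1, v_2, \ldots, v_{2m}) \in \{1,\ldots,n\}^{2m}$ such that each label $i \in \{1,\ldots,n\}$ appears a number of times belonging to $\DD$. This is exactly the bijection underlying the multigraph process recalled in Section~\ref{sec:model}: from such a sequence one reads off the multigraph with edge multiset $\{\{v_{2i-1}, v_{2i}\} : 1 \leq i \leq m\}$, and conversely each ordering of a multigraph $G \in \mgdnm$ yields a unique such sequence. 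Under this correspondence, the number of occurrences of vertex $i$ in the sequence is exactly the degree of $i$ in $G$ (loops contributing twice, as required by the convention).

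It then remains to count these sequences. Grouping by the multiset of occurrences, for each valid degree sequence $(d_1,\ldots,d_n)$ with $d_i \in \DD$ and $\sum_i d_i = 2m$, the number of matching sequences is the multinomial coefficient $\binom{2m}{d_1,\ldots,d_n}$. Summing,
\[
    \sum_{G \in \mgdnm} \orderings(G)
    = (2m)! \sum_{\substack{(d_1,\ldots,d_n) \in \DD^n \\ d_1 + \cdots + d_n = 2m}} \prod_{i=1}^{n} \frac{1}{d_i!}
    = (2m)! \, [x^{2m}] \set_{\DD}(x)^n,
\]
where the last equality is the standard EGF interpretation of $\set_{\DD}(x)^n = \bigl(\sum_{d \in \DD} x^d/d!\bigr)^n$. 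Dividing by $2^m m!$ yields the claimed formula.

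I do not expect any serious obstacle: the argument is a clean combinatorial identity, the only point requiring care being the bookkeeping at step two. In particular, one must verify that the degree convention in which loops contribute $2$ is exactly what makes the number of occurrences of a label in the length-$2m$ sequence agree with the graph-theoretic degree, so that the constraint ``all degrees lie in $\DD$'' translates to ``all multiplicities lie in $\DD$'' without a correction term.
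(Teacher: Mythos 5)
Your proof is correct and follows essentially the same route as the paper: both reduce the total weight to counting length-$2m$ vertex sequences whose label multiplicities lie in $\DD$, the paper phrasing this as a bijection with set partitions $(P_1,\ldots,P_n)$ of $\{1,\ldots,2m\}$ handled by the symbolic method, while you compute the same count directly via multinomial coefficients. The point you flag about the loop convention is exactly the right one to check, and it goes through as you say.
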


The proof of this theorem is elementary by the definition
of the compensation factor.
Now applying Lemma~\ref{th:large_power} to the exact expression,
we derive the asymptotics of multigraphs with degree constraints.
Let us first eliminate three simple cases.
\begin{itemize}
\item
When~$\DD$ contains only one integer $\DD = \{d\}$,
$\mgdnm$ is the set of $d$-regular multigraphs.
The total weight of $\mgdnm$ is then~$0$
if $2m \neq n d$, and
$
    \frac{(2m)!}{2^m m! d!^n}
$ 
otherwise.
\item
The sum of the degrees of the vertices
is equal to~$2 m$, so~$\mgdnm$ is empty
when $2m/n < \min(\DD)$ or $2m/n > \max(\DD)$.
\item
The periodicity~$p$ of~$\DD$ is equal to~$\gcd\{ d - r\ |\ d \in \DD\}$.
For each vertex~$v$ of a multigraph from~$\mgdnm$,
it follows that~$p$ divides $\deg(v) - r$.
By summation over all vertices, we conclude that if~$p$ does not divide $2m - n r$,
then the set~$\mgdnm$ is empty.
\end{itemize}
The two last points obviously hold for~$\sgdnm$.

\begin{theorem} \label{th:multigraphs_asymptotics}
Consider a set~$\DD \subset \naturals$ of size at least~$2$.
Let $r = \min(\DD)$ denote its valuation
and $p = \gcd \{d_1 - d_2\ |\ d_1, d_2 \in \DD\}$ its periodicity.
Let~$m$, $n$ denote two integers tending to infinity,
such that $2m/n$ stays in a fixed compact interval of the open interval $]r, \max(\DD)[$,
and~$p$ divides~$2m - r n$, 
then the total weight of~$\mgdnm$ is equal to
\[
    \sum_{G \in \mgdnm} \kappa(G)
    =
    \frac{(2m)!}{2^m m!}
    \frac{p}{\sqrt{2 \pi n \zeta \phi'(\zeta)}}
    \frac{\set_{\DD}(\zeta)^n}{\zeta^{2m}}
    \left(1 + \bigO(n^{-1})\right)
\]
where $\phi(x) = \frac{x \set_{\DD-1}(x)}{\set_{\DD}(x)}$
and~$\zeta$ is the unique positive solution of $\phi(\zeta) = \frac{2m}{n}$.
\end{theorem}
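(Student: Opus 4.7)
The plan is to apply Lemma~\ref{th:large_power} directly to the exact coefficient-extraction formula given by Theorem~\ref{th:multigraphs}, taking $B(z) = \set_{\DD}(z)$, $N = 2m$, and the auxiliary function $A \equiv 1$ (so the parameter compact $Y$ reduces to a point).

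First I would verify the structural hypotheses on $B$. The series $\set_{\DD}(z)$ has nonnegative coefficients, is entire on $\complex$, and is non-monomial because $|\DD| \geq 2$. Its series valuation $\min\{k : [z^k] B(z) \neq 0\}$ is exactly $\min(\DD) = r$, and the periodicity $\gcd\{k : [z^{k-r}] B(z) \neq 0\}$ coincides with $\gcd\{d - r : d \in \DD\} = p$. Since $\set_{\DD}'(z) = \set_{\DD-1}(z)$, the function $\phi(z) = z B'(z)/B(z)$ in the lemma is precisely the $\phi$ appearing in the theorem.

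The one slightly substantive step is to identify the range of $\phi$ on $]0, \infty[$ with $]r, \max(\DD)[$, so that the compact interval $K$ of the theorem sits inside $]r, \lim_{x \to \infty} \phi(x)[$ as required by the lemma. The plan here is to use the probabilistic interpretation: for $x > 0$, $\phi(x)$ is the mean of the random variable $d$ distributed on $\DD$ with probabilities proportional to $x^d/d!$. A direct computation yields $x \phi'(x) = \mathrm{Var}(d)$, which is strictly positive because $|\DD| \geq 2$, so $\phi$ is a strictly increasing continuous bijection from $]0,\infty[$ onto its range. Comparing the leading behaviour of $\set_{\DD}$ and $\set_{\DD-1}$ as $x \to 0^+$ and as $x \to \infty$ then gives $\phi(0^+) = r$ and $\phi(\infty) = \max(\DD)$. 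Hence for each value of $2m/n$ in the compact $K \subset {]}r, \max(\DD){[}$, the saddle equation $\phi(\zeta) = 2m/n$ has a unique positive solution $\zeta$, varying continuously with $2m/n$.

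With all the hypotheses verified, Lemma~\ref{th:large_power} produces
\[
    [x^{2m}] \set_{\DD}(x)^n
    = \frac{p}{\sqrt{2 \pi n \zeta \phi'(\zeta)}} \frac{\set_{\DD}(\zeta)^n}{\zeta^{2m}} (1 + \bigO(n^{-1}))
\]
uniformly for $2m/n \in K$, provided $p$ divides $2m - r n$, and $0$ otherwise. Multiplying by the prefactor $(2m)!/(2^m m!)$ from Theorem~\ref{th:multigraphs} yields the formula of Theorem~\ref{th:multigraphs_asymptotics}. I expect the checking of the range of $\phi$ (monotonicity via the variance identity, and the two limits) to be the main nontrivial step; all the rest is a direct matching of notation to the hypotheses of the lemma.
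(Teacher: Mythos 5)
Your proposal is correct and follows exactly the route the paper intends: apply Lemma~\ref{th:large_power} with $B(z) = \set_{\DD}(z)$, $N = 2m$ and $A \equiv 1$ to the exact formula of Theorem~\ref{th:multigraphs} (the paper does not even write this out, stating only that the theorem follows by ``applying Lemma~\ref{th:large_power} to the exact expression''). Your verification of the hypotheses --- valuation and periodicity of $\set_{\DD}$, and the identification of the range of $\phi$ via the variance identity $x\phi'(x) = \mathrm{Var}(d) > 0$ --- is sound and usefully fills in the details the paper leaves implicit.
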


    \subsubsection{Typical multigraphs with degree constraints} \label{sec:typical}

Let us recall that an edge is \emph{simple}
if it is neither a loop nor a multiple edge.
Before turning to the enumeration of simple graphs with degree constraints,
we first describe the behavior of non-simple edges
in a typical multigraph from~$\mgdnm$.
No proofs are given here, as stronger results
will be derived later.

Using random sampling, we observe 
that in most of the multigraphs from~$\mgdnm$,
all non-simple edges have low multiplicity and are well separated.
This motivates the following definition.
A multigraph from~$\mgdnm$ is in~$\starmgdnm$
if all its non-simple edges are loops or double edges,
and each vertex belongs to at most one loop or (exclusive) one double edge.
Let~$|E|_e$ denote the number of occurrences of the element~$e$ in the multiset~$E$.
Formally, $\starmgdnm$ is characterized
as the set of multigraphs~$G$ from~$\mgdnm$ 
such that for all vertices~$u,v,w$, we have
\[
\begin{array}{lcl}
    |E(G)|_{\{v,v\}} \leq 1,
    && |E(G)|_{\{u,v\}} = |E(G)|_{\{v,w\}} = 2 \implies u=w,\\[0.1cm]
    |E(G)|_{\{v,w\}} \leq 2,
    && \{v,v\} \in E(G) \implies \forall w,\ |E(G)|_{\{v,w\}} \leq 1.
\end{array}
\]
The complementary set, $\mgdnm \setminus \starmgdnm$,
is denoted by~$\zeromgdnm$,
and illustrated in Figure~\ref{fig:nonstar}.
% presents four examples of multigraphs from~$\zeromgdnm$.
\begin{figure}
\centerline{\includegraphics{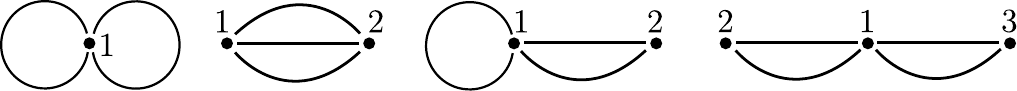}}
\caption{Four examples of multigraphs from~$\zeromgdnm$.}
\label{fig:nonstar}
\end{figure}

%%%%%%%%%%%%%%%%%%%%%%%%%%%%%%%%%%%%%%%%%%%%%%%%%%%%%%%%%%%
\subsection{Simple graphs with degree constraints} \label{sec:simple}
%%%%%%%%%%%%%%%%%%%%%%%%%%%%%%%%%%%%%%%%%%%%%%%%%%%%%%%%%%%

We introduce the notation~$\sgdnm$ for the set
of simple graphs with~$n$ vertices, $m$ edges
and all degrees in~$\DD$,
\textit{i.e.} multigraphs from~$\mgdnm$ that contain
neither loops nor multiple edges.
The enumeration of simple graphs with degree constraints
is derived in Theorem~\ref{th:simple}.
First, in Section~\ref{sec:inclusion_exclusion_star},
we describe an inclusion-exclusion process that outputs~$|\sgdnm|$
when applied to~$\starmgdnm$.
In Section~\ref{sec:inclusion_exclusion_all},
this process is then applied to~$\mgdnm$,
and the error introduced is proven to be negligible
in Section~\ref{sec:negligible}.

In order to forbid loops and multiple edges in multigraphs from~$\mgdnm$,
we introduce the notion of \emph{marked multigraphs}.

    \subsubsection{Marked multigraphs}

A \emph{marked multigraph}~$G$ is
a triplet $(V(G), E(G), \bar{E}(G))$,
where~$V(G)$ denotes the set of vertices,
$E(G)$ the multiset of \emph{normal edges},
and~$\bar{E}(G)$ the multiset of \emph{marked edges},
where both normal and marked edges are unordered pairs of vertices.
We say that a marked multigraph~$G$ belongs to a family~$\FF$ of (unmarked) multigraphs
if the unmarked multigraph $(V(G), E(G) \cup \bar{E}(G))$ is in~$\FF$.

We now extend to marked multigraphs the definitions
of degree, orderings and compensation factors, 
introduced for multigraphs in Section~\ref{sec:model}.
The \emph{degree} of a vertex from a marked multigraph~$G$
is equal to its number of occurrences in the multiset~$E(G) \cup \bar{E}(G)$.
An \emph{ordering} of a marked multigraph~$G$ 
with $m = |E(G)| + |\bar{E}(G)|$ edges is a sequence
\[
    S = ((v_1,w_1,t_1), \ldots, (v_m, w_m, t_m))
\]
from~$(V(G) \times V(G) \times \{0,1\})^m$ such that
the multiset
$
    \{\{v_i,w_i\}\ |\ (v_i, w_i, 0) \in S\}
$
is equal to~$E(G)$, and the multiset
$
    \{\{v_i,w_i\}\ |\ (v_i, w_i, 1) \in S\}
$
is equal to~$\bar{E}(G)$.
The number of orderings of a given marked multigraph~$G$
is denoted by~$\orderings(G)$, and its \emph{compensation factor} is
\[
    \kappa(G) = \frac{\orderings(G)}{2^m m!}.
\]
For example, consider the marked multigraph~$G$ with
\[
    V(G) = \{1,2\}, \quad E(G) = \{\{1,2\}\}, \quad \bar{E}(G) = \{\{1,2\},\{1,2\}\}.
\]
Its number of orderings is~$24$, and therefore its compensation factor is
$
    \kappa(G) = 1/2,
$
whereas it is~$1/6$ for~$G$ without the marks,
\[
    V(G) = \{1,2\}, \quad E(G) = \{\{1,2\},\{1,2\}, \{1,2\}\}.
\]

In the following, we will consider families of marked multigraphs
where the marked edges are loops or multiple edges.
Given a marked multigraph~$G$, then~$\ell(G)$ denotes
the number of loops in~$\bar{E}(G)$,
and~$k(G)$ the number of distinct edges from~$\bar{E}(G)$
that are not loops.
The generating function of a family~$\FF$ or marked multigraphs is
\[
    F(u,v) 
    = 
    \sum_{G \in \FF}
    \kappa(G)
    u^{k(G)}
    v^{\ell(G)}.
\]

    \subsubsection{Inclusion-exclusion process} \label{sec:inclusion_exclusion_star}

In this section, we build an operator~$\marked$
that inputs a family of multigraphs
and outputs a family of marked multigraphs.
It is designed so that the asymptotics of its generating function~$\marked_{\mgdnm}(u,v)$
is linked to the asymptotics of~$|\sgdnm|$.
In order to justify the construction, 
we first introduce the operators~$\marked^{(1)}$
and~$\marked^{(2)}$.

\paragraph{First marking.}
If we could mark all loops and multiple edges from~$\mgdnm$,
the enumeration of simple graphs with degree constraints would be easy.
Indeed, given a family~$\FF$ of multigraphs, let~$\marked_{\FF}^{(1)}$
denote the marked multigraphs from~$\FF$ with all loops
and multiple edges marked.
Since the simple graphs are the multigraphs that have neither loops nor multiple edges,
we have
\[
    \marked_{\mgdnm}^{(1)}(0,0) 
    = 
    \sum_{G \in \mgdnm}
    \kappa(G) 0^{k(G)} 0^{\ell(G)}
    =
    \sum_{G \in \sgdnm}
    \kappa(G),
\]
which is equal to~$|\sgdnm|$, because simple graphs
have a compensation factor equal to~$1$.
Unfortunately, we do not have a description of this family
in the symbolic method formalism.

\paragraph{Second marking.}
The inclusion-exclusion principle advises us to mark \emph{some} of the non-simple edges.
Let~$\marked_{\FF}^{(2)}$ denote the set of marked multigraphs~$G$ from~$\FF$
such that each edge from~$\bar{E}(G)$ is either a loop,
or has multiplicity at least~$2$ in~$\bar{E}(G)$
and does not belong to~$E(G)$.
This construction implies the relation
\[
    \marked_{\FF}^{(2)}(u,v)
    =
    \marked_{\FF}^{(1)}(u+1, v+1),
\]
and therefore
\[
    |\sgdnm| = \marked_{\mgdnm}^{(2)}(-1,-1).
\]
The natural idea to build a marked multigraph~$G$ from~$\marked_{\mgdnm}^{(2)}$
is to first choose some loops and multiple edges to put in~$\bar{E}(G)$,
then complete~$E(G)$ with unmarked edges,
which may well form other loops and multiple edges,
in a way that ensures~$G \in \marked_{\mgdnm}^{(2)}$.
However, the description of the set of marked edges is complicated, 
because of the numerous possible intersection patterns.

\paragraph{Third marking.}
We have seen in Section~\ref{sec:typical}
that in most of the multigraphs from~$\mgdnm$,
non-simple edges do not intersect.
This motivates the following definition.
Given a set~$\FF$ of multigraphs, let $\marked(\FF)$ denote 
the set of marked multigraphs from~$\FF$ such that 
each vertex is in exactly one of the following cases:
\begin{itemize}
\item
the vertex belongs to no marked edge,
\item
the vertex belongs to one marked loop and no other marked edge,
\item
the vertex belongs to two identical marked edges and no other marked edge.
\end{itemize}
Therefore, each marked edge is a loop of multiplicity~$1$ or a double edge.
This marking process links the multigraphs from~$\starmgdnm$,
defined in Section~\ref{sec:typical}, to the simple graphs with degree constraints.

\begin{lemma} \label{th:inclusion_exclusion}
The value $\marked_{\starmgdnm}(-1,-1)$
is equal to the number of simple graphs in~$\sgdnm$.
\end{lemma}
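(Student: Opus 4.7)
The plan is to reorganise the sum defining $\marked_{\starmgdnm}(u,v)$ by first fixing the underlying unmarked multigraph $G \in \starmgdnm$ and then summing over all its admissible markings. Writing $k_0(G)$ and $\ell_0(G)$ for the number of double edges and loops of $G$, the $\marked$ operator is defined so that each such loop or double edge can independently be placed in $\bar{E}$ (``marked'') or in $E$ (``unmarked''); this gives $2^{k_0(G) + \ell_0(G)}$ admissible markings, and the statistic $k(\tilde{G})$ (resp.\ $\ell(\tilde{G})$) counts exactly how many double edges (resp.\ loops) were marked. So, provided I can show that passing from $G$ to any of its marked versions $\tilde{G}$ preserves the compensation factor, the binomial theorem yields
\[
    \marked_{\starmgdnm}(u,v)
    = \sum_{G \in \starmgdnm} \kappa(G) \, (1+u)^{k_0(G)} (1+v)^{\ell_0(G)}.
\]

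The technical heart of the argument is therefore the identity $\kappa(\tilde{G}) = \kappa(G)$. I would prove it bijectively. Given an ordering of $G$ (a sequence of $m$ ordered vertex pairs realising $E(G)$), an ordering of $\tilde{G}$ is obtained by choosing a mark bit $t_i \in \{0,1\}$ for each of the $m$ slots so that the marked slots carry exactly the multiset $\bar{E}(\tilde{G})$. In $\starmgdnm$, each distinct edge of $G$ has multiplicity $1$ (simple edges and loops) or $2$ (double edges), and the $\marked$ constraints force the slots carrying a given edge to be either all marked or all unmarked. Consequently, the assignment of bits is uniquely determined by $\tilde{G}$, so $\orderings(\tilde{G}) = \orderings(G)$ and hence $\kappa(\tilde{G}) = \kappa(G)$.

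To finish, specialise at $(u,v) = (-1,-1)$. With the convention $0^0 = 1$, only the terms with $k_0(G) = \ell_0(G) = 0$ survive, i.e.\ those $G \in \starmgdnm$ that have no loops and no double edges, which are precisely the elements of $\sgdnm$. For such $G$, $\kappa(G) = 1$, and one obtains $\marked_{\starmgdnm}(-1,-1) = |\sgdnm|$. The main difficulty is the preservation of the compensation factor under marking; a direct computation through the formula $\kappa = \orderings/(2^m m!)$ is possible but tedious, and the bijective argument above is cleaner because the definitions of $\starmgdnm$ and $\marked$ have been tailored so that every distinct edge is marked in an ``all-or-nothing'' fashion.
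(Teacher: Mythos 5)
Your proof is correct and follows essentially the same inclusion--exclusion route as the paper: your identity $\marked_{\starmgdnm}(u,v) = \sum_{G \in \starmgdnm} \kappa(G)\,(1+u)^{k_0(G)}(1+v)^{\ell_0(G)}$ is precisely the paper's chain $\marked_{\starmgdnm} = \marked^{(2)}_{\starmgdnm}$, $\marked^{(2)}_{\starmgdnm}(u,v) = \marked^{(1)}_{\starmgdnm}(u+1,v+1)$, followed by evaluation at $(-1,-1)$. The one added value of your write-up is that you explicitly verify that the compensation factor is preserved under the all-or-nothing marking (via the forced bit assignment on orderings), a point the paper's proof leaves implicit.
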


Applying the operator~$\marked$ to the decomposition
\[
    \mgdnm
    =
    \starmgdnm
    \uplus
    \zeromgdnm,
\]
we find
\[
    \marked_{\mgdnm}(u,v)
    =
    \marked_{\starmgdnm}(u,v)
    +
    \marked_{\zeromgdnm}(u,v)
\]
which implies, after evaluation at~$(u,v) = (-1,-1)$ and reordering of the terms, 
\[
    |\sgdnm|
    =
    \marked_{\mgdnm}(-1,-1)
    -
    \marked_{\zeromgdnm}(-1,-1).
\]
We compute the asymptotics of $\marked_{\mgdnm}(-1,-1)$
in Section~\ref{sec:inclusion_exclusion_all},
and prove that $\marked_{\zeromgdnm}(-1,-1)$
is negligible in Section~\ref{sec:negligible}.

    \subsubsection{Application of the inclusion-exclusion process to all multigraphs with degree constraints} \label{sec:inclusion_exclusion_all}

We start with an exact expression 
of~$\weight_{\mgdnm}(u,v)$ in Lemma~\ref{th:exact_marked},
then derive its asymptotics in Lemma~\ref{th:asympt_marked}.

\begin{lemma} \label{th:exact_marked}
We have the formal equality
\[
	\weight_{\mgdnm}(u,v)
    =
    \frac{(2m)!}{2^m m!}
    [x^{2m}]
    \Bigg(
    \sum_{k,\ell \geq 0}
    a_{n,m,2k+\ell}
    \frac{(u W_{\frac{n}{m}}(x)^2)^k}{k!}
    \frac{(v W_{\frac{n}{m}}(x))^{\ell}}{\ell!}
    \Bigg)
    \set_{\DD}(x)^n,
\]
where $a_{n,m,j} = 0$ when $j$ is greater than $\min(n,m)$, otherwise
\begin{align*}
    a_{n,m,j}
    &=
    \frac{n!}{(n-j)! n^{j}}
    \frac{m!}{(m-j)! m^{j}}
    \frac{(2m-2j)! (2m)^{2j}}{(2m)!},
    \\
    W_{\frac{n}{m}}(x)
    &=
    \frac{n}{4m}
    \frac{x^2 \set_{\DD-2}(x)}{\set_{\DD}(x)}.
\end{align*}
\end{lemma}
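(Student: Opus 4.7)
The plan is to enumerate marked multigraphs in $\marked_{\mgdnm}$ by first fixing the numbers $k$ of marked double edges and $\ell$ of marked loops, mimicking the orderings-based argument used for Theorem~\ref{th:multigraphs}. Since each vertex of a multigraph in $\marked(\cdot)$ sits in at most one marked edge, fixing $k$ and $\ell$ amounts to choosing a vertex set $M$ of size $j := 2k + \ell$, a subset of $\ell$ loop-vertices inside $M$, and a pairing of the remaining $2k$ vertices into $k$ unordered double-edge pairs. The number of such skeletons is $\binom{n}{j}\binom{j}{\ell}\frac{(2k)!}{2^k k!}$. The normal part is then an ordinary multigraph with $m' := m - 2k - \ell$ edges on $V$; since each marked edge contributes $2$ to the degree of each of its endpoints, the constraint that the underlying multigraph lies in $\mgdnm$ translates into the normal degree at vertex $i$ lying in $\DD$ if $i \notin M$ and in $\DD - 2$ if $i \in M$.

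I would next count orderings. Given a marked skeleton and a normal part, an ordering of the complete marked multigraph is produced by (i) distributing the $m$ positions in the ordering between the $2k$ double-edge triples, the $\ell$ loop triples and the $m'$ normal triples ($\frac{m!}{(2k)!\,\ell!\,m'!}$ ways); (ii) filling the double-edge positions ($(2k)!\cdot 2^k$ ways: match positions to the $k$ labelled double edges, then pick one of two orientations at each of the $2k$ positions); (iii) filling the loop positions ($\ell!$ ways, each loop admitting a single orientation); and (iv) choosing an ordering of the normal part. By the same argument as in Theorem~\ref{th:multigraphs}, summing orderings over all compatible normal parts yields $(2m')!\,[x^{2m'}]\,\set_{\DD}(x)^{n-j}\,\set_{\DD-2}(x)^{j}$. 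Multiplying all these factors together, dividing by $2^m m!$ to convert orderings into compensation factors, and including the weight $u^k v^\ell$, after the many cancellations between the skeleton count, the multinomial and the marked-part orderings, the coefficient of $u^k v^\ell$ in $\weight_{\mgdnm}(u,v)$ reduces to
\[
\frac{1}{2^m}\,\frac{n!}{(n-j)!\, k!\, \ell!}\, \frac{(2m')!}{m'!}\,[x^{2m'}]\,\set_{\DD}(x)^{n-j}\,\set_{\DD-2}(x)^{j}.
\]

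The final step is to repackage this as the single coefficient extraction announced in the lemma. Using $\set_{\DD-2}(x)^{j} = (4m/(n x^2))^j\, W_{n/m}(x)^{j}\, \set_{\DD}(x)^{j}$ and shifting $[x^{2m'}]$ back to $[x^{2m}]$ (which absorbs the extra $x^{2j}$) rewrites the coefficient extraction as $[x^{2m}]\, W_{n/m}(x)^j\, \set_{\DD}(x)^n$ times a scalar. Verifying that this scalar agrees with $\frac{(2m)!}{2^m m!}\cdot \frac{a_{n,m,j}}{k!\,\ell!}$ reduces, after expanding $a_{n,m,j}$, to the elementary identity $(2m)^{2j} = (4m)^j\, m^j$. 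Finally, the factorization $W^{j} = W^{2k}\cdot W^\ell$ turns the $u^k v^\ell$-contribution into the $(uW^2)^k (vW)^\ell / (k!\,\ell!)$ form stated in the lemma. The argument is entirely combinatorial bookkeeping; the only real difficulty is tracking the many factorials and powers of $n$, $m$ and $2$ through the algebraic repackaging.
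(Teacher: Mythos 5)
Your proposal is correct and follows essentially the same route as the paper's own proof: a constructive count of orderings of marked multigraphs obtained by choosing the marked skeleton (vertices, loop/double-edge structure, positions and orientations in the ordering) and completing with a normal part whose degrees lie in $\DD$ or $\DD-2$, followed by the same algebraic repackaging into $a_{n,m,j}$ and $W_{\frac{n}{m}}(x)$. The individual factors you list agree with the paper's (e.g.\ your $(2k)!\,2^k$ equals its $\frac{(2k)!\,4^k}{2^k}$), and your intermediate coefficient and the identity $(2m)^{2j}=(4m)^j m^j$ check out.
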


The proof is constructive by considering all the disjoint sets of vertices
where we can put a loop or a double edges.
We observe that 
when~$2k+\ell$ is fixed while~$n,m$ tends to infinity,
then~$a_{n,m,2k+\ell}$ tends to~$1$.
The double sum can then be approximated by an exponential,
and it is tempting to conclude
\[
    \weight_{\mgdnm}(u,v)
    \sim
    [x^{2m}]
    e^{u W_{\frac{n}{m}}(x)^2 + v W_{\frac{n}{m}}(x)}
    \set_{\DD}(x)^n.
\]
The next lemma formalize this intuition.
A multivariate generating function $f(x_1, \ldots, x_n)$ is said to
\emph{dominate coefficient-wise} another series $g(x_1, \ldots, x_n)$ if
for all $k_1, \ldots, k_n \geq 0$,
\[
	\left|
	[x_1^{k_1} \cdots x_n^{k_n}]
	g(x_1, \ldots, x_n)
	\right|
	\leq
	[x_1^{k_1} \cdots x_n^{k_n}]
	f(x_1, \ldots, x_n).
\]

\begin{lemma} \label{th:technical}
When~$m/n$ stays in a fixed compact interval of~$\reals_{>0}$,
there is an entire bivariate analytic function~$C(u,v)$ such that,
for~$n$ large enough, 
$\frac{1}{n} C(u,v)$ dominates coefficient-wise
\begin{equation} \label{eq:technical}
	e^{u+v}
	-
	\sum_{k,\ell \geq 0}
	a_{n,m,2k+\ell}
    \frac{u^{k}}{k!}
    \frac{v^{\ell}}{\ell!}.
\end{equation}
\end{lemma}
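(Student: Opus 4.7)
The plan is to bound~\eqref{eq:technical} coefficient by coefficient. Since $e^{u+v} = \sum_{k,\ell \geq 0}\frac{u^k v^\ell}{k!\ell!}$, the coefficient of $u^k v^\ell$ in~\eqref{eq:technical} equals $(1 - a_{n,m,2k+\ell})/(k!\ell!)$. The lemma therefore reduces to exhibiting constants $K, \rho > 0$ depending only on the compact interval $[c_1,c_2]$ where $m/n$ lives, such that, for all $n$ large and all $j \geq 0$,
\[
    n\,\bigl|1 - a_{n,m,j}\bigr| \;\leq\; K\,(j+1)^2\,\rho^{j}.
\]
Indeed, $C(u,v) = K \sum_{k,\ell \geq 0}(2k+\ell+1)^2 \rho^{2k+\ell}\frac{u^k v^\ell}{k!\ell!}$ is then a polynomial in $u,v$ times $e^{\rho^2 u + \rho v}$, hence entire, and $\tfrac{1}{n}C$ dominates~\eqref{eq:technical} coefficient-wise.

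To establish the uniform bound, I would use the telescoped factorization
\[
    a_{n,m,j} = \prod_{i=0}^{j-1}\frac{1 - i/n}{1 - (2i+1)/(2m)} \qquad (j \leq \min(n,m)),
\]
together with the convention $a_{n,m,j} = 0$ when $j > \min(n,m)$, splitting into three regimes. When $j \leq \sqrt{n}$, Taylor-expanding each logarithm yields $\log a_{n,m,j} = -j(j-1)/(2n) + j^2/(2m) + \bigO(j^3/n^2) = \bigO(j^2/n)$ uniformly in the compact regime; exponentiating gives $|1 - a_{n,m,j}| = \bigO(j^2/n)$ and hence $n\,|1-a_{n,m,j}| = \bigO(j^2)$. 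When $\sqrt{n} < j \leq \min(n,m)$, I drop the numerators (each $\leq 1$) and compare the remaining sum $-\sum_{i=0}^{j-1}\log(1-(2i+1)/(2m))$ to the Riemann integral $m\int_0^{(2j+1)/(2m)}-\log(1-u)\,du$ by monotonicity of $-\log(1-x)$; this integral evaluates to $m[(1-x)\log(1-x)+x]$ at $x=(2j+1)/(2m)$, which is at most $mx = (2j+1)/2$ by the negativity of $(1-x)\log(1-x)$ on $[0,1]$. So $a_{n,m,j} \leq e^{j+1}$, giving $|1-a_{n,m,j}| \leq 2e^{j+1}$; since $n \leq j^2$ in this range, $n\,|1-a_{n,m,j}| \leq 2j^2 e^{j+1}$. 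When $j > \min(n,m)$, $a_{n,m,j}=0$ so $|1-a_{n,m,j}| = 1$; compactness forces $j > c_1 n$ for some $c_1 > 0$, so $n \leq j^2/c_1^2$ and the bound is immediate. Taking $\rho = e$ and $K$ suitably large unifies the three cases.

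The main technical hurdle is the middle regime: individual factors $1/(1-(2i+1)/(2m))$ grow unboundedly as $i$ approaches $m$, so no termwise estimate yields a polynomial-in-$j$ bound. The crucial observation is that the \emph{sum} of their logarithms remains linear in $j$, thanks to the Riemann-integral comparison and the identity $\int_0^1 -\log(1-u)\,du = 1$; once the clean bound $a_{n,m,j} \leq e^{j+1}$ is in hand, the rest of the argument is routine bookkeeping.
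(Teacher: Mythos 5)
Your argument is correct and its overall architecture matches the paper's: both reduce the lemma to a uniform bound on $n\,|1-a_{n,m,j}|$ by coefficients of an entire majorant, and both split according to whether $j=2k+\ell$ is small or large. The details, however, are genuinely different. The paper writes $a_{n,m,j}=b_{n,j}b_{m,j}/b_{2m,2j}$ with $b_{n,j}=\prod_{i=0}^{j-1}(1-i/n)$ and sandwiches $a_{n,m,j}$ via the two elementary bounds $b_{n,j}\geq 1-j^2/n$ (small $j$) and $b_{n,j}\geq e^{-j}$ (all $j$); in the large regime it can only conclude $n\,|1-a_{n,m,j}|\leq n\,e^{4k+2\ell}$, and it kills the factor $n$ by sacrificing half of the denominator, taking a majorant with coefficients $e^{4k+2\ell}/\sqrt{k!\,\ell!}$ and using $n/\sqrt{k!\,\ell!}\to 0$. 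You instead exploit the fully telescoped single product $a_{n,m,j}=\prod_{i=0}^{j-1}\frac{1-i/n}{1-(2i+1)/(2m)}$ (which is correct: the odd-indexed factors of $b_{2m,2j}$ survive and the even-indexed ones cancel $b_{m,j}$), obtain $a_{n,m,j}\leq e^{j+1}$ by an integral comparison, and absorb the factor $n$ via the cheaper observation $n\leq j^{2}$ in the regime $j>\sqrt{n}$. This buys you a majorant that keeps the full $1/(k!\,\ell!)$, so entirety of $C$ is immediate, and it forces you to treat explicitly the regime $j>\min(n,m)$ where $a_{n,m,j}=0$ --- a case the paper's sandwich inequality does not literally cover (though its final bound still holds there). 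Your approach is, if anything, slightly cleaner.

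One small point to repair: when $j=m$ the upper limit $(2j+1)/(2m)$ of your comparison integral exceeds $1$, where $-\log(1-u)$ is undefined, so the monotonicity comparison breaks on the last subinterval. The conclusion $a_{n,m,j}\leq e^{j+1}$ is still true; the quickest fix is to use convexity of $-\log(1-u)$ instead of monotonicity, since your evaluation points $(2i+1)/(2m)$ are exactly the midpoints of the intervals $[i/m,(i+1)/m]$, and the midpoint rule underestimates the integral of a convex function, giving $\sum_{i=0}^{j-1}-\log\bigl(1-\tfrac{2i+1}{2m}\bigr)\leq m\int_{0}^{j/m}-\log(1-u)\,du\leq j$ for every $j\leq m$. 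This is a one-line adjustment, not a gap in the strategy.
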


We can now derive the asymptotics of~$\marked_{\mgdnm}(u,v)$.
As observed in the discussion preceding Theorem~\ref{th:multigraphs_asymptotics},
the result is trivial when $\DD$ contains only one integer,
when $2m/n$ is outside~$[\min(\DD), \max(\DD)]$
and when $p$ does not divide $2m - \min(\DD) n$.

\begin{lemma} \label{th:asympt_marked}
Assume $\DD$ has size at least~$2$, 
valuation~$r$ and periodicity~$p$.
Let~$m$, $n$ denote two integers tending to infinity,
such that $2m/n$ stays in a fixed compact interval of $]r, \max(\DD)[$
and~$p$ divides $2m - r n$.
When $u$, $v$ stay in a fixed compact, then
\begin{equation} \label{eq:asympt_marked}
    \weight_{\mgdnm}(u,v)
    =
    \frac{(2m)!}{2^m m!}
    \frac{\set_{\DD}(\zeta)^n}{\zeta^{2m}}
    \frac{p}{\sqrt{2 \pi n \zeta \phi'(\zeta)}}
    e^{u W_{\frac{n}{m}}(\zeta)^2 + v W_{\frac{n}{m}}(\zeta)}
    (1 + \bigO(n^{-1})),
\end{equation}
where $W_{\frac{n}{m}}(x) = \frac{n}{4m} \frac{x^2 \set_{\DD-2}(x)}{\set_{\DD}(x)}$, 
%we have used the notations of Lemma~\ref{th:exact_marked},
$\phi(x) = \frac{x \set_{\DD-1}(x)}{\set_{\DD}(x)}$
and $\phi(\zeta) = \frac{2m}{n}$.
\end{lemma}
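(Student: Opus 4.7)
The plan is to combine the exact expression of Lemma~\ref{th:exact_marked} with the approximation of Lemma~\ref{th:technical}, then extract the coefficient $[x^{2m}]$ using Lemma~\ref{th:large_power}. First I apply Lemma~\ref{th:technical} with $s = u W_{\frac{n}{m}}(x)^2$ and $t = v W_{\frac{n}{m}}(x)$ inside the formula of Lemma~\ref{th:exact_marked}, rewriting the inner double sum as $e^{u W_{\frac{n}{m}}(x)^2 + v W_{\frac{n}{m}}(x)}$ plus a bivariate error series whose coefficients in $(u,v)$ are dominated by $\frac{1}{n} C(s,t)$ for some entire $C$. This splits $\weight_{\mgdnm}(u,v)$ into a main term and a remainder.

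For the main term, I expand the exponential as a double Taylor series in $u,v$. Using $W_{\frac{n}{m}}(x)^{2k+\ell} \set_{\DD}(x)^n = (n/4m)^{2k+\ell} x^{2(2k+\ell)} \set_{\DD-2}(x)^{2k+\ell} \set_{\DD}(x)^{n-(2k+\ell)}$, the $(k,\ell)$-th coefficient extraction becomes
\[
    \left(\frac{n}{4m}\right)^{2k+\ell} [x^{2m-2(2k+\ell)}] \set_{\DD-2}(x)^{2k+\ell} \set_{\DD}(x)^{n-(2k+\ell)},
\]
to which Lemma~\ref{th:large_power} applies with $B(z) = \set_{\DD}(z)$, entire $A(z) = \set_{\DD-2}(z)^{2k+\ell}$ and exponent $n-(2k+\ell)$. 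The associated saddle point $\zeta_{k,\ell}$, solving $\phi(\zeta_{k,\ell}) = (2m - 2(2k+\ell))/(n - (2k+\ell))$, lies within $\bigO((2k+\ell)/n)$ of $\zeta$. Taylor-expanding around $\zeta$ and using the identity $\frac{n}{4m} \cdot \frac{\zeta^2 \set_{\DD-2}(\zeta)}{\set_{\DD}(\zeta)} = W_{\frac{n}{m}}(\zeta)$ shows that each term collapses, up to a multiplicative $1 + \bigO(n^{-1})$ uniform in bounded $(k,\ell)$, to $\frac{(u W_{\frac{n}{m}}(\zeta)^2)^k (v W_{\frac{n}{m}}(\zeta))^\ell}{k!\ell!}$ times the common prefactor $\frac{p}{\sqrt{2\pi n \zeta \phi'(\zeta)}} \frac{\set_{\DD}(\zeta)^n}{\zeta^{2m}}$. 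Summing over $(k,\ell)$ produces the exponential factor claimed in the statement.

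The remainder from the first step is handled analogously: by the coefficient-wise domination from Lemma~\ref{th:technical}, its modulus is bounded by the same $[x^{2m}]$ quantity with the error series replaced by $\frac{1}{n} C$, and the saddle-point analysis applied to the entire majorant $C$ then yields a bound of $\bigO(n^{-1})$ times the main asymptotic, uniformly for $(u,v)$ in a fixed compact. The main obstacle will be justifying the term-by-term application in step~2: since $\zeta_{k,\ell}$ depends on $k,\ell$, I need the error from Lemma~\ref{th:large_power} to be uniform enough that the series in $(k,\ell)$ reassembles into $e^{u W_{\frac{n}{m}}(\zeta)^2 + v W_{\frac{n}{m}}(\zeta)}$ with total relative error $\bigO(n^{-1})$. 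Controlling the tail via the coefficient-wise comparison with the entire function $C$ from Lemma~\ref{th:technical} will be the delicate bookkeeping step.
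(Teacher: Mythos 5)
Your overall architecture (split off a main term via Lemma~\ref{th:technical}, bound the remainder by the entire majorant $C$, extract $[x^{2m}]$ with Lemma~\ref{th:large_power}) matches the paper's proof, but your treatment of the main term has a genuine gap. You expand $e^{u W_{\frac{n}{m}}(x)^2 + v W_{\frac{n}{m}}(x)}$ as a double series and apply Lemma~\ref{th:large_power} separately to each $(k,\ell)$-term, with exponent $n-(2k+\ell)$, prefactor $\set_{\DD-2}(x)^{2k+\ell}$ and a shifted saddle point $\zeta_{k,\ell}$. The lemma's $\bigO(n^{-1})$ is uniform only for a fixed $\mathcal{C}^2$ prefactor $A(y,\cdot)$ with $y$ in a compact and for the ratio $N/n$ in a fixed compact $K\subset\;]r,\max(\DD)[$; here the prefactor ranges over an unbounded family, and for $2k+\ell$ comparable to $n$ (the sum has on the order of $m$ nonzero terms) the ratio $(2m-4k-2\ell)/(n-2k-\ell)$ leaves $K$, so the lemma does not apply to those terms at all. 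You flag this as the ``delicate bookkeeping step'' and propose to control the tail via the function $C$ of Lemma~\ref{th:technical} --- but that $C$ dominates the discrepancy $e^{u+v}-\sum_{k,\ell} a_{n,m,2k+\ell}\,u^k v^\ell/(k!\,\ell!)$, not the tail of the Taylor expansion of the exponential itself, so it gives you nothing here. The whole difficulty is avoidable: Lemma~\ref{th:large_power} is stated precisely so that an analytic prefactor $A(y,z^p)$ can be carried along, and since $W_{\frac{n}{m}}(x)$ has valuation $0$ and period $p$, the paper applies the lemma once to $[x^{2m}]\,e^{u W_{\frac{n}{m}}(x)^2 + v W_{\frac{n}{m}}(x)}\set_{\DD}(x)^n$ with $y=(u,v,n/m)$ in a compact, obtaining the factor $e^{u W_{\frac{n}{m}}(\zeta)^2 + v W_{\frac{n}{m}}(\zeta)}$ directly with a uniform $\bigO(n^{-1})$.

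A second, smaller point concerns the remainder: you substitute $s=u W_{\frac{n}{m}}(x)^2$, $t=v W_{\frac{n}{m}}(x)$ into the coefficient-wise bound in $(s,t)$ and then extract $[x^{2m}]$. Since $W_{\frac{n}{m}}(x)$ need not have nonnegative Taylor coefficients, domination in $(s,t)$ does not transfer to a bound on $x$-coefficients after composition. The paper handles this by replacing $W_{\frac{n}{m}}$ with its coefficient-wise absolute value $\WW_{\frac{n}{m}}$ before applying Lemma~\ref{th:large_power} to the majorant $\frac{1}{n}C(u\WW_{\frac{n}{m}}(x)^2, v\WW_{\frac{n}{m}}(x))$; you need the same device.
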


The proof is a consequence of Lemma~\ref{th:large_power}, Lemma~\ref{th:exact_marked}
and Lemma~\ref{th:technical}. 

    \subsubsection{Negligible marked multigraphs} \label{sec:negligible}

Recall that~$\zeromgdnm$ denotes the set $\mgdnm \setminus \starmgdnm$.
In Lemma~\ref{th:large_excess}, we prove that $\marked_{\zeromgdnm}(-1,-1)$ is negligible.
%Combined with Lemmas~\ref{th:asympt_marked} and~$\ref{th:}$,
To do so, we first bound $\marked_{R}(1,1)$
for a family~$R$ of marked multigraphs from~$\mgdnm$
with mandatory edges.

\begin{lemma} \label{th:mandatory_edges}
Let $e_1, \ldots, e_j$ 
denote~$j$ edges on the set of vertices $\{1,\ldots,n\}$, 
and~$R$ the set of multigraphs from $\mgdnm$ 
that contain those edges, with multiplicities
(\textit{i.e.} an edge with~$k$ occurrences in the list 
has at least~$k$ occurrences in the multiset of edges of the multigraph)
\[
	R
	=
	\Big\{ 
	G \in \mgdnm\ \Big|\ 
    \forall 1 \leq i \leq j,\ 
	e_i \in E(G) 
    \text{ with multiplicities}
	\Big\}.
\]
Assume~$\DD$ contains at least two integers and has valuation~$r$.
Let~$m$, $n$ denote two integers tending to infinity,
such that $2m/n$ stays in a fixed compact interval of $]r, \max(\DD)[$, then
\[
	\weight_{R}(1,1)
	=
	\bigO 
	\big( 
	n^{-j}
	\weight_{\mgdnm}(1,1) 
	\big).
\]
\end{lemma}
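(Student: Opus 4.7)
The plan is to strip off the $j$ mandatory edges, reducing the problem to bounding the weight of a family of multigraphs with $m - j$ edges and degree constraints modified at a bounded set of vertices, and then to compare with $\weight_{\mgdnm}(1, 1)$ via the asymptotic tools already developed.

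For each vertex $v$, let $k_v$ denote its degree in the mandatory multiset $M_0 = \{e_1, \ldots, e_j\}$ (loops counted twice), so that $\sum_v k_v = 2j$ and the set $V_1 = \{v : k_v > 0\}$ has at most $2j$ elements. Writing $G' = G \setminus M_0$ (multiset difference) for $G \in R$ defines a bijection between $R$ and the family $\mathcal{G}'$ of multigraphs on the same vertex set with $m - j$ edges and degree constraint $\DD - k_v$ at each vertex $v$. A direct comparison of orderings shows $\kappa(G) \leq \kappa(G')$. An analogous bijection on marked multigraphs, combined with a vertex-by-vertex case analysis at the $|V_1| \leq 2j$ affected vertices (accounting for the extra marking options introduced by mandatory loops or mandatory-induced double edges), yields $\weight_R(1, 1) \leq C_j \, \weight_{\mathcal{G}'}(1, 1)$ where $C_j$ depends only on $j$ and $\max(\DD)$.

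For $\weight_{\mathcal{G}'}(1, 1)$, a derivation paralleling Lemmas~\ref{th:exact_marked} and~\ref{th:asympt_marked} (with vertex-dependent degree sets) yields an exact expression of the form
\[
    \weight_{\mathcal{G}'}(1, 1)
    =
    \frac{(2m - 2j)!}{2^{m-j}(m-j)!}
    [x^{2m - 2j}]
    \Bigg(\sum_{k, \ell \geq 0} a_{n, m-j, 2k+\ell} \frac{\WW(x)^{2k}}{k!} \frac{\WW(x)^{\ell}}{\ell!}\Bigg)
    A(x) \set_{\DD}(x)^n,
\]
where $A(x) = \prod_{v \in V_1} \set_{\DD - k_v}(x)/\set_{\DD}(x)$ is a fixed analytic factor with $A(\zeta) = \bigO(1)$, and $\WW$ is the analog of $W_{n/m}$ for the reduced problem. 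Applying Lemma~\ref{th:large_power} together with Lemma~\ref{th:technical} to extract the coefficient, all analytic quantities ($\zeta$, $\phi'(\zeta)$, $A(\zeta)$, $\WW(\zeta)$) remain bounded, and the saddle point for extracting $x^{2m - 2j}$ differs from $\zeta$ by $\bigO(1/n)$. Dividing by the asymptotic for $\weight_{\mgdnm}(1, 1)$ given by Lemma~\ref{th:asympt_marked}, the ratio is dominated by the combinatorial prefactor
\[
    \frac{(2m - 2j)! \cdot 2^m \cdot m!}{(2m)! \cdot 2^{m-j} \cdot (m-j)!} \cdot \zeta^{2j}
    = \frac{2^j \cdot (m)_j}{(2m)_{2j}} \cdot \zeta^{2j}
    = \bigO(n^{-j}),
\]
establishing the claim.

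The main delicate point is the reduction $\weight_R(1, 1) \leq C_j \, \weight_{\mathcal{G}'}(1, 1)$ at the level of marked multigraphs: a mandatory edge can itself be a loop or can combine with an edge of $G'$ into a double edge, and the vertex-disjointness condition of the $\marked$ operator couples these possibilities with the extant markings of $G'$ in a nontrivial way. A careful case analysis at each of the $|V_1| \leq 2j$ affected vertices still bounds the extra multiplicative cost by a constant $C_j$ independent of $n$, which is all that is needed for the final bound.
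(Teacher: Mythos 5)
Your proposal is correct and follows essentially the same route as the paper: both reduce $R$ to the family of multigraphs with $m-j$ edges and degree sets shifted to $\DD - k_v$ at the $\bigO(j)$ affected vertices (the paper via an injection on orderings through an auxiliary set $\tilde{R}$ with distinguished mandatory edges, you via the multiset-difference bijection), then apply the analogues of Lemmas~\ref{th:exact_marked} and~\ref{th:asympt_marked} and extract the factor $n^{-j}$ from exactly the same prefactor ratio $\frac{2^j (m)_j}{(2m)_{2j}}\zeta^{2j}$. The marking bookkeeping you flag as delicate is handled in the paper by counting positions, orientations and mark bits of the $j$ distinguished edges inside an ordering (a factor $m^j 4^j$ that cancels against the normalization), which is the same constant-cost case analysis you describe.
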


Figure~\ref{fig:nonstar} displays four multigraphs from~$\zeromgdnm$.
Actually, any multigraph from~$\zeromgdnm$ contains one of those four graphs as a subgraph,
and this property can be described in terms of mandatory edges.
In the following lemma, we use this fact to bound $\marked_{\zeromgdnm}(-1,-1)$.

\begin{lemma} \label{th:large_excess}
Assume~$\DD$ contains at least two integers, has valuation~$r$ and periodicity~$p$.
Let~$m$, $n$ denote two integers tending to infinity,
such that $2m/n$ stays in a fixed compact interval of $]r, \max(\DD)[$,
and~$p$ divides~$2m - n r$, then
\[
    \marked_{\zeromgdnm}(-1,-1)
    =
    \bigO
	\left(
	n^{-1}
	\weight_{\mgdnm}(-1,-1)
	\right).
\]
\end{lemma}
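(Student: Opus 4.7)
The plan is to reduce the claim to a bound on $\marked_{\zeromgdnm}(1,1)$ via the triangle inequality, then cover $\zeromgdnm$ by the multigraphs that contain one of a short list of forbidden patterns as mandatory edges, and apply Lemma~\ref{th:mandatory_edges} to each pattern. Since every compensation factor is nonnegative,
\[
    |\marked_{\zeromgdnm}(-1,-1)|
    \leq
    \marked_{\zeromgdnm}(1,1).
\]
Applying Lemma~\ref{th:asympt_marked} at $(u,v) = (\pm 1, \pm 1)$ shows that $\marked_{\mgdnm}(1,1)/\marked_{\mgdnm}(-1,-1) = e^{2(W^2 + W)}(1 + \bigO(n^{-1}))$, which is $\Theta(1)$ because $W = W_{n/m}(\zeta)$ remains bounded when $\zeta$ ranges over the compact set determined by the assumption on $2m/n$. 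It therefore suffices to establish
\[
    \marked_{\zeromgdnm}(1,1) = \bigO(n^{-1})\,\marked_{\mgdnm}(1,1).
\]

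The forbidden patterns are extracted from the four defining conditions of $\starmgdnm$. I claim that every $G \in \zeromgdnm$ must contain, as a sub-multiset of its edges, at least one of the following configurations: a \emph{triple edge}, consisting of three copies of some $\{u,v\}$ with $u \neq v$ (three mandatory edges); a \emph{double loop}, consisting of two copies of some $\{v,v\}$ (two mandatory edges); \emph{two touching double edges}, consisting of two copies of $\{u,v\}$ together with two copies of $\{v,w\}$ for distinct $u,v,w$ (four mandatory edges); or a \emph{loop plus a double edge at the same vertex}, consisting of one copy of $\{v,v\}$ together with two copies of $\{v,w\}$ for $w \neq v$ (three mandatory edges). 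This is verified by a case analysis on which of the $\starmgdnm$ conditions fails.

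Denoting by $R_i^{(\text{pos})}$ the subfamily of $\mgdnm$ whose mandatory edges are the $i$-th pattern placed at a given choice of vertices, the union bound yields
\[
    \marked_{\zeromgdnm}(1,1)
    \leq
    \sum_{i=1}^{4} \sum_{\text{pos}} \marked_{R_i^{(\text{pos})}}(1,1).
\]
Lemma~\ref{th:mandatory_edges} bounds each summand by $\bigO(n^{-j_i})\,\marked_{\mgdnm}(1,1)$ uniformly in the position, where $j_i \in \{3,2,4,3\}$ counts the mandatory edges of the $i$-th pattern. The number of positions is $\bigO(n^2)$, $\bigO(n)$, $\bigO(n^3)$ and $\bigO(n^2)$ respectively, so each pattern contributes $\bigO(n^{-1})\,\marked_{\mgdnm}(1,1)$ and the bound follows.

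The main obstacle is the case analysis for the forbidden patterns: one must check that every failure of the conjunction of conditions defining $\starmgdnm$ produces one of the four listed configurations as a sub-multiset of edges, handling in particular degenerate cases such as a loop of multiplicity at least three (which already contains the double-loop pattern) or an edge of multiplicity at least four (which already contains the triple-edge pattern). Once this is settled, the counting of vertex positions and the applications of Lemma~\ref{th:mandatory_edges} and Lemma~\ref{th:asympt_marked} are routine.
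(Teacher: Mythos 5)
Your proposal is correct and follows essentially the same route as the paper's own proof: the same four forbidden configurations (double loop, triple edge, loop meeting a double edge, two touching double edges), the same position-counting versus mandatory-edge-counting bookkeeping giving a net factor $n^{-1}$ for each pattern via Lemma~\ref{th:mandatory_edges}, and the same final appeal to Lemma~\ref{th:asympt_marked} to pass from the $(1,1)$-weight back to the $(-1,-1)$-weight. The only cosmetic difference is that you apply the mandatory-edge lemma to every placement and invoke uniformity, whereas the paper first reduces each pattern to one canonical placement by symmetry; these are equivalent.
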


The intuition supporting this proof is that
a multigraph~$G$ belongs to~$\zeromgdnm$ if and only if it contains a vertex~$v$
that is in one of the four configurations depicted in Figure~\ref{fig:nonstar}.
According to Lemma~\ref{th:mandatory_edges},
multigraphs from~$\mgdnm$ that contain those subgraphs
have a negligible total weight.
Now we have all the ingredients to prove Theorem~\ref{th:simple}.

\begin{proof}[Proof of Theorem~\ref{th:simple}]
In Lemma~\ref{th:inclusion_exclusion}, we have proven
that the number of simple graphs in $\sgdnm$ is equal to
$\weight_{\starmgdnm}(-1,-1)$.
By a set manipulation, this quantity can be rewritten
\[
	\weight_{\mgdnm}(-1,-1)
	-
	\weight_{\zeromgdnm}(-1,-1),
\]
where $\zeromgdnm = \mgdnm \setminus \starmgdnm$.
Replacing the second term with the result of Lemma~\ref{th:large_excess}, we obtain
\[
    |\sgdnm| = 
	\weight_{\mgdnm}(-1,-1)
	\big(
	1 + \bigO(n^{-1}) 
	\big).
\]
Finally, the asymptotics of $\weight_{\mgdnm}(-1,-1)$
has been derived in Lemma~\ref{th:asympt_marked}.
\end{proof}

    \section{Random generation}

In order to keep a combinatorial interpretation,
we focused on generating functions~$\set_{\DD}(x)$
with coefficients in~$\{0,1\}$.
Our results hold more generally for any generating function~$D(x)$
with nonnegative coefficients and large enough radius of convergence
(so that the saddle-point from Lemma~\ref{th:large_power} is well defined).
Multigraphs are then counted with a weight that depends
of the degrees of their vertices
\[
    \operatorname{weight}(G) =
    \kappa(G)
    \prod_{v \in V(G)}
    \deg(v)! 
    [x^{\deg(v)}]
    D(x).
\]

The present work has been guided by experiments
on large random graphs with degree constraints.
We used exact and Boltzmann sampling (\cite{DFLS04}).
Observe that to build a random simple graph from~$\sgdnm$,
one can sample multigraphs from~$\mgdnm$
and reject until the multigraph is simple.
As a consequence of Theorem~\ref{th:simple},
the expected number of rejections is~$e^{-W_{\frac{n}{m}}(\zeta)^2 -W_{\frac{n}{m}}(\zeta)}$
(using the notations of the theorem).

    \subsection{Boltzmann sampling} \label{sec:boltzmann}

The construction of the Boltzmann algorithm 
is straightforward from Theorem~\ref{th:multigraphs}.
To build a random multigraph with degrees in~$\DD$,
$n$ vertices and approximately~$m$ edges,
the algorithm first computes a positive value~$x$,
according to the number of edges targeted.
It then draws independently~$n$ integers~$(d_1, \ldots, d_n)$,
following the law
\begin{equation} \label{eq:boltz}
    \mathds{P}(d) = 
    \frac{\left([z^d] D(z)\right) x^d}{D(x)}
\end{equation}
with~$D(x) = \set_{\DD}(x)$.
If their sum is odd, a new sequence is drawn.
Otherwise, the algorithm outputs a random multigraph
with sequence of degrees~$(d_1, \ldots, d_n)$.
To do so, as in the configuration model (\cite{B80}, \cite{Wo78}),
each vertex~$v_i$ receives~$d_i$ half-edges,
and a random pairing on the half-edges is drawn uniformly.

Therefore, the random distribution induced on multigraphs 
by the Boltzmann sampling algorithm
is identical to the configuration model.
Conversely, given a probability distribution on~$\naturals$,
one can choose~$D(x)$ so that the distribution is equal
to the one described by Equation~\eqref{eq:boltz}.
Thus, we expect random multigraphs from the configuration model
and multigraphs with degree constraints
to share many statistical properties.

    \subsection{Recursive method}

For the sampling of a multigraph in~$\mgdnm$, 
the generator first draws a sequence of degrees, 
and then performs a random pairing of half-edges,
as in configuration model and the Boltzmann sampler.
Each sequence~$(d_1, \ldots, d_n)$ from~$\DD^n$ is
drawn with weight
$\prod_{v=1}^n 1/(d_v)!$.
In the first step, we use dynamic programming
to precompute the values~$(S_{i,j})_{0 \leq i \leq n, 0 \leq j \leq 2m}$,
sums of the weights of all the sequences of $i$ degrees that sum to $j$
\[
    S_{i,j} = 
    \sum_{\substack{d_1, \ldots, d_i \in \DD \\ d_1 + \cdots + d_i = j}}
    \prod_{v=1}^i \frac{1}{d_v!},
\]
using the initial conditions and the recursive expression
\[
    S_{i,j} = 
    \begin{cases}
    1 & \text{if $(i,j) = (0,0)$,}\\
    0 & \text{if $i=0$ and $j \neq 0$, or if $j < 0$,}\\
    \sum_{d \in \DD} \frac{S_{i-1,j-d}}{d!}
    & \text{otherwise.}
    \end{cases}
\]
After this precomputation, we generate
the sequence of degrees as follows: first we sample
the last degree~$d_n$ of the sequence
according to the distribution
\[
    \proba(d_n = d) =
    \frac{S_{n-1,2m-d}}{d! S_{n, 2m}},
\]
then we recursively generate the remaining sequence $(d_1, \ldots, d_{n-1})$, 
which must sum to $2m-d_n$. 
Once the sequence of degrees is computed, 
we generate a random pairing on the corresponding half-edges.

%%%%%%%%%%%%%%%%%%%%%%%%%%%%%%%%%%%%%%%%%%%%%%%%%%%%%%%%%%%
    \section{Forthcoming research}
%%%%%%%%%%%%%%%%%%%%%%%%%%%%%%%%%%%%%%%%%%%%%%%%%%%%%%%%%%%

The results presented can be extended in several ways.
The case where~$2m/n$ tends to~$\min(\DD)$ or~$\max(\DD)$ could be considered.
For example, \cite{PW03} have derived, using elementary tools,
the asymptotics of graphs with a lower bound on the minimum degree
when~$m = \bigO(n \log(n))$.
This extension would only require to adjust the saddle-point method from Lemma~\ref{th:large_power}.

We have also derived results on the enumeration of graphs 
where the degree sets vary with the vertices.
The model inputs an infinite sequence of sets~$(\DD_1, \DD_2, \ldots)$
and output graphs where each vertex~$v$ has its degree in~$\DD_v$.
The techniques presented in this paper can be extended to this case,
if some technical conditions are satisfied,
such as the convergence of the series~$n^{-1} \sum_{v \geq 1} \log(\set_{\DD_v}(x))$.
This extension will be part of a longer version of the paper.
Two examples of such families are
graphs with degree parities (\cite{RR82}),
and graphs with a given degree sequence (\cite{BC78}).

We believe that complete asymptotic expansion can be derived
for graphs with degree constraints.
This would require to apply a more general version of Lemma~\ref{th:large_power},
such as presented in Chapter~$4$ by \cite{PW13},
and we would have to consider more complex families than~$\starmgdnm$.

The asymptotics of connected graphs from~$\sgnm$ when~$m-n$ tends to infinity
has first been derived by \cite{BCM90}.
Since then, two new proofs were given,
one by \cite{PW05}, the other by \cite{HS06}.
The proof of Pittel and Wormald relies on a link
between connected graphs and graphs from a particular
family of graphs with degree constraints
(graphs with degrees at least~$2$).
In \cite{ElieThesis}, following the same approach,
but using analytic combinatorics, we obtained
a short proof for the asymptotics of connected multigraphs
from~$\mgnm$ when~$m-n$ tends to infinity.
We now plan to extend this result to simple graphs,
and to derive a complete asymptotic expansion.

%Hypergraphs are a classic model for data basis.
%Adding degree constraints would allow more realistic modeling.

In this paper, we have focused on the enumeration of graphs with degree constraints.
We can now start the investigation on the typical structure of random instances of such graphs.
%This could be a step toward a proof of conjectures from statistical physics,
%which state that the typical structure of a random graph
%is determined by its degree distribution (\cite{}).
An application would be the enumeration of Eulerian graphs,
\textit{i.e.} connected Euler graphs.

Finally, the inclusion-exclusion technique we used
to remove loops and double edges
can be extended to forbid any family of subgraphs.

\bibliographystyle{abbrvnat}
\bibliography{}%/home/elie/research/articles/bibliography/biblio}

\begin{thebibliography}{18}
\providecommand{\natexlab}[1]{#1}
\providecommand{\url}[1]{\texttt{#1}}
\expandafter\ifx\csname urlstyle\endcsname\relax
  \providecommand{\doi}[1]{doi: #1}\else
  \providecommand{\doi}{doi: \begingroup \urlstyle{rm}\Url}\fi

\bibitem[Bender and Canfield(1978)]{BC78}
E.~A. Bender and E.~Canfield.
\newblock The asymptotic number of labeled graphs with given degree sequences.
\newblock \emph{Journal of Combinatorial Theory, Series A}, 24\penalty0
  (3):\penalty0 296 -- 307, 1978.
\newblock ISSN 0097-3165.

\bibitem[Bender et~al.(1990)Bender, Canfield, and McKay]{BCM90}
E.~A. Bender, E.~R. Canfield, and B.~D. McKay.
\newblock The asymptotic number of labeled connected graphs with a given number
  of vertices and edges.
\newblock \emph{Random Structures and Algorithm}, 1:\penalty0 129--169, 1990.

\bibitem[Bollob\'as(1980)]{B80}
B.~Bollob\'as.
\newblock A probabilistic proof of an asymptotic formula for the number of
  labelled regular graphs.
\newblock \emph{European Journal of Combinatorics}, 1:\penalty0 311--316, 1980.

\bibitem[Chyzak et~al.(2005)Chyzak, Mishna, and Salvy]{CMS05}
F.~Chyzak, M.~Mishna, and B.~Salvy.
\newblock Effective scalar products of d-finite symmetric functions.
\newblock \emph{Journal of Combinatorial Theory, Series A}, 112\penalty0
  (1):\penalty0 1 -- 43, 2005.

\bibitem[de~Panafieu(2014)]{ElieThesis}
E.~de~Panafieu.
\newblock \emph{Analytic Combinatorics of Graphs, Hypergraphs and Inhomogeneous
  Graphs}.
\newblock PhD thesis, Universit\'e Paris-Diderot, Sorbonne Paris-Cit\'e, 2014.

\bibitem[Duchon et~al.(2004)Duchon, Flajolet, Louchard, and Schaeffer]{DFLS04}
P.~Duchon, P.~Flajolet, G.~Louchard, and G.~Schaeffer.
\newblock Boltzmann samplers for the random generation of combinatorial
  structures.
\newblock \emph{Combinatorics, Probability and Computing}, 13:\penalty0 2004,
  2004.

\bibitem[Flajolet and Sedgewick(2009)]{FS09}
P.~Flajolet and R.~Sedgewick.
\newblock \emph{Analytic Combinatorics}.
\newblock Cambridge University Press, 2009.

\bibitem[Flajolet et~al.(1989)Flajolet, Knuth, and Pittel]{FKP89}
P.~Flajolet, D.~E. Knuth, and B.~Pittel.
\newblock The first cycles in an evolving graph.
\newblock \emph{Discrete Mathematics}, 75\penalty0 (1-3):\penalty0 167--215,
  1989.

\bibitem[Janson et~al.(1993)Janson, Knuth, \L{}uczak, and Pittel]{JKLP93}
S.~Janson, D.~E. Knuth, T.~\L{}uczak, and B.~Pittel.
\newblock The birth of the giant component.
\newblock \emph{Random Structures and Algorithms}, 4\penalty0 (3):\penalty0
  233--358, 1993.

\bibitem[Mallows and Sloane(1975)]{MS75}
C.~L. Mallows and N.~J.~A. Sloane.
\newblock Two-graphs, switching classes and euler graphs are equal in number.
\newblock \emph{journal of applied mathematics}, 28\penalty0 (4), 1975.

\bibitem[Pemantle and Wilson(2013)]{PW13}
R.~Pemantle and M.~C. Wilson.
\newblock \emph{Analytic Combinatorics in Several Variables}.
\newblock Cambridge University Press, New York, NY, USA, 2013.

\bibitem[Pittel and Wormald(2003)]{PW03}
B.~Pittel and N.~C. Wormald.
\newblock Asymptotic enumeration of sparse graphs with a minimum degree
  constraint.
\newblock \emph{J. Comb. Theory, Ser. A}, 101\penalty0 (2):\penalty0 249--263,
  2003.

\bibitem[Pittel and Wormald(2005)]{PW05}
B.~Pittel and N.~C. Wormald.
\newblock Counting connected graphs inside-out.
\newblock \emph{Journal of Combinatorial Theory, Series B}, 93\penalty0
  (2):\penalty0 127--172, 2005.

\bibitem[Read and Robinson(1982)]{RR82}
R.~Read and R.~Robinson.
\newblock Enumeration of labelled multigraphs by degree parities.
\newblock \emph{Discrete Mathematics}, 42\penalty0 (1):\penalty0 99 -- 105,
  1982.

\bibitem[Robinson(1969)]{RWR69}
R.~W. Robinson.
\newblock Enumeration of euler graphs.
\newblock \emph{Proof Techniques in Graph Theory}, pages 147--153, 1969.

\bibitem[van~der Hofstad(2014)]{Ho14}
R.~van~der Hofstad.
\newblock \emph{Random Graphs and Complex Networks. Vol. I}.
\newblock 2014.

\bibitem[van~der Hofstad and Spencer(2006)]{HS06}
R.~van~der Hofstad and J.~Spencer.
\newblock Counting connected graphs asymptotically.
\newblock \emph{European Journal on Combinatorics}, 26\penalty0 (8):\penalty0
  1294--1320, 2006.

\bibitem[Wormald(1978)]{Wo78}
N.~Wormald.
\newblock \emph{Some problems in the enumeration of labelled graphs}.
\newblock Newcastle University, 1978.

\end{thebibliography}
\appendix
\section{Proofs}

In this appendix, we include the proofs of the lemmas and theorems.

\begin{proof}[Proof of Theorem~\ref{th:multigraphs}]
By definition of the compensation factor, the number of multigraphs of the theorem is equal to
\[
    \frac{1}{2^m m!}
    \sum_{G \in \mgdnm}
    \orderings(G).
\]
Let us consider an ordering
\[
    ((v_1, v_2), (v_2, v_3), \ldots, (v_{2m-1}, v_{2m})).
\]
of a multigraph~$G$ from $\mgdnm$.
For all~$1 \leq i \leq n$, let $P_i = \{ j\ |\ v_j = i\}$ 
denote the set of positions of the vertex~$i$ in this ordering.
Since the vertices have their degrees in~$\DD$, each $P_i$ has size in~$\DD$.
This implies a bijection between
\begin{itemize}
\item
the orderings of multigraphs in $\mgdnm$, 
\item
the sequences of sets $(P_1, \ldots, P_n)$,
where the size of each set is in~$\DD$,
and $(P_1, \ldots, P_n)$ is a partition of~$\{1, \ldots, 2m\}$
(\textit{i.e.} the sets are disjoints 
and $\bigcup_{i=1}^n P_i = \{1, \ldots, 2m\}$).
\end{itemize}
We now interpret $(P_1, \ldots, P_n)$ as a sequence of sets that contain labelled objects
and apply the \emph{Symbolic Method} (see \cite{FS09}).
The exponential generating function of sets of size in~$\DD$ is $\set_{\DD}(x)$.
The bijection then implies
\[
    \sum_{G \in \mgdnm}
    \orderings(G)
    =
    (2m)! [x^{2m}] \set_{\DD}(x)^n,
\]
and the theorem follows, after division by~$2^m m!$.
\end{proof}

\begin{proof}[Proof of Lemma~\ref{th:inclusion_exclusion}]
As explained in the paragraphs \textbf{First markink} and \textbf{Second marking}
of Section~\ref{sec:inclusion_exclusion_star}, the following relations hold
\begin{align*}
    \marked^{(1)}_{\starmgdnm}(0,0) &= |\sgdnm|,\\
    \marked^{(2)}_{\starmgdnm}(u,v) &= \marked^{(1)}_{\starmgdnm}(u+1,v+1).
\end{align*}
Furthermore, by construction of~$\starmgdnm$, we have
\[
    \marked_{\starmgdnm}(u,v)
    =
    \marked_{\starmgdnm}^{(2)}(u,v),
\]
so~$\marked_{\starmgdnm}(-1,-1) = |\sgdnm|$.
\end{proof}

\begin{proof}[Proof of Lemma~\ref{th:exact_marked}]
To build an ordering of a multigraph from $\marked_{\mgdnm}$ 
with $2 k$ vertices in marked double edges
and $\ell$ vertices in marked loops,
we perform the following steps:
\begin{enumerate}
\item
choose the labels of the $2 k$ vertices that appear in the marked double edges,
and the $\ell$ vertices that appear in the marked loops.
There are $\binom{n}{2 k, \ell, n - 2k - \ell}$ such choices.
\item
choose the distinct $k$ edges of distinct vertices, 
among the chosen~$2k$ vertices,
that will become the marked double edges.
There are $\frac{(2 k)!}{2^k k!}$ such choices.
\item
order the $2 k$ marked double edges
and the vertices in each of them.
There are $\frac{(2k)! 4^k}{2^k}$ ways to order them.
\item
order the $\ell$ loops.
There are $\ell!$ ways to do so.
\item
choose among the $m$ edges of the final ordering
which ones receive marked loops and which ones receive marked double edges.
There are $\binom{m}{2k, \ell, m-2k-\ell}$ choices.
\item
to fill the rest of the final ordering,
build an ordering of length $2m - 4 k - 2 \ell$
where the $2 k$ vertices that belong to marked double edges and
the $\ell$ vertices that appear in marked loops have degree in~$\DD - 2$,
while the other $n - 2k - \ell$ vertices have degree in~$\DD$.
The number of such orderings is 
$(2m - 4k - 2\ell)!
    [x^{2m - 4k - 2\ell}]
    \set_{\DD-2}(x)^{2k+\ell}
    \set_{\DD}(x)^{n - 2k - \ell}$.
\end{enumerate}
This bijective construction implies the following enumerative result
\begin{align*}
    \sum_{G \in \marked(\mgdnm)}
    &
    \kappa(G)
    u^{k(G)}
    v^{\ell(G)}\\
    =
    \frac{1}{2^m m!}
    \sum_{k,\ell \geq 0}
    &
    \binom{n}{2 k, \ell, n - 2k - \ell}
    \frac{(2 k)!}{2^k k!}
    \frac{(2k)! 4^k}{2^k}
    \ell!
    \binom{m}{2k,\ell, m-2k-\ell}
    \\
    &
    (2m - 4k - 2\ell)!
    [x^{2m - 4k - 2\ell}]
    \set_{\DD-2}(x)^{2k+\ell}
    \set_{\DD}(x)^{n - 2k - \ell}
    u^{k(G)}
    v^{\ell(G)}.    
\end{align*}
After simplification, this last expression can be rewritten
\[
	\weight_{\mgdnm}(u,v)
	=
    \frac{(2m)!}{2^m m!}
    [x^{2m}]
    \Bigg(
    \sum_{k,\ell \geq 0}
    a_{n,m,2k+\ell}
    \frac{(u W_{\frac{n}{m}}(x)^2)^k}{k!} 
    \frac{(v W_{\frac{n}{m}}(x))^{\ell}}{\ell!}
    \Bigg)
    \set_{\DD}(x)^n.
\]
\end{proof}

\begin{proof}[Proof of Lemma~\ref{th:technical}]
Developing the exponential as a double sum
\[
    e^{u+v}
    =
    \sum_{k, \ell \geq 0}
    \frac{u^k}{k!}
    \frac{v^{\ell}}{\ell!},
\]
the result can be rewritten
\[
	n \frac{| 1 - a_{n,m,2k+\ell} |}{k! \ell!}
	\leq
	[u^k v^\ell]
	C(u,v)
\]
for all~$k$, $\ell$.
We prove that when~$n$ is large enough,
we have
\begin{equation} \label{eq:bounded_coef}
	n \frac{| 1 - a_{n,m,2k+\ell} |}{k! \ell!}
	\leq
	\left( 1 + \frac{n}{m} \right)
	\frac{(2k+\ell)^2 e^{4k + 2\ell}}{\sqrt{k! \ell!}}
\end{equation}
for all~$k, \ell \geq 1$.
Since the right-hand side are the coefficients
of a function analytic on $\complex^2$, this will conclude the proof.

Let~$b_{n,j}$ denote the value
$
    \prod_{i=0}^{j-1} \left( 1 - \frac{i}{n} \right),
$
then observe that $a_{n,m,j}$ is equal to 
$
	b_{n,j} b_{m,j}/b_{2m, 2j}.
$
Since $b_{n,j} \leq 1$, if $(c_{n,j})$ denotes a sequence such that
$c_{n,j} \leq b_{n,j}$ for all $(n,j)$, then
$
    c_{n,j} c_{m,j}
    \leq
    a_{n,m,j}
    \leq
    c_{2m,2j}^{-1}
$,
which implies
\begin{equation} \label{eq:technical_ineq}
    n \frac{|1-a_{n,m,2k+\ell}|}{k! \ell!}
    \leq
    n
    \frac{\max(c_{2m,4k+2\ell}^{-1}-1, 1-c_{n,2k+\ell} c_{m,2k+\ell})}{k! \ell!}.
\end{equation}
%Equation~\eqref{eq:bounded_coef} is satisfied when~$k=\ell=0$,
%so we now assume~$2k+\ell \geq 1$, and prove that it holds
%both for $2k+\ell \leq \sqrt{m}/2$ and for $2k+\ell > \sqrt{m}/2$.
We now prove that Equation~\eqref{eq:bounded_coef} holds
both for $2k+\ell \leq \sqrt{m}/2$ and for $2k+\ell > \sqrt{m}/2$.

\textbf{Case $2k+\ell \leq \sqrt{m}/2$.}
We prove by recurrence on~$j$ that
$
    b_{n,j} \geq 1 - \frac{j^2}{n}.
$
The recurrence is initialized with $b_{n,0} = 1$.
Assuming it is satisfied at rank~$j$, then
\[
    b_{n,j+1}
    = \left( 1 - \frac{j}{n} \right) b_{n,j}
    \geq
    \left( 1 - \frac{j}{n} \right) \left( 1 - \frac{j^2}{n} \right)
    \geq
    1 - \frac{(j+1)^2}{n},
\]
which concludes the proof of the recurrence.
This implies, using Inequality~\eqref{eq:technical_ineq},
\[
    n \frac{|1-a_{n,m,2k+\ell}|}{k! \ell!} 
	\leq
    \frac{n}{k! \ell!}
    \max \left(
    \frac{1}{1 - \frac{(4k+2\ell)^2}{2m}} - 1,
    1 - \left(1 - \frac{(2k+\ell)^2}{n} \right) \left(1 - \frac{(2k+\ell)^2}{m} \right)
    \right).
%    \left( 1 + \frac{4 m}{n} \right) \frac{(2k+\ell)^2}{k! \ell!}.
\]
Since $2k+\ell \leq \sqrt{m}/2$, the first argument of the maximum function
is at most~$1$. The second argument is smaller than~$(n^{-1} + m^{-1})(2k+\ell)^2$.
Therefore, we have
\[
    n \frac{|1-a_{n,m,2k+\ell}|}{k! \ell!} 
	\leq
    \left( 1 + \frac{n}{m} \right)
    \frac{(2k+\ell)^2}{k! \ell!},
\]
and Inequality~\eqref{eq:bounded_coef} is satisfied.

\textbf{Case $2k+\ell > \sqrt{m}/2$.}
We first prove $b_{n,j} \geq e^{-j}$.
To do so, we apply a sum-integral comparison in the expression
\[
    \log(b_{n,j}) 
    = \sum_{i=0}^{j-1} \log \left( 1-\frac{i}{n} \right)
    \geq
    \int_{0}^j
    \log \left( 1-\frac{x}{n} \right)
    d x
    = - (n-j) \log \left( 1 - \frac{j}{n} \right) -j
    \geq 
    -j.
\]
Inequality~\eqref{eq:technical_ineq} then implies
\[
    n \frac{|1-a_{n,m,2k+\ell}|}{k! \ell!}
    \leq
    \frac{n}{k! \ell!} 
    \max \left(
        e^{4k+2\ell}-1, 1-e^{-(4k+2\ell)}
    \right)
    \leq
    \frac{n}{\sqrt{k! \ell!}}
	\frac{e^{4k+2\ell}}{\sqrt{k! \ell!}}.
\]
We now prove that
$n / \sqrt{k! \ell!}$ is smaller than~$1$ for~$n$ large enough.
Indeed, $2k+\ell > \sqrt{m}/2$ implies $\max(k,\ell) \geq \sqrt{m}/8$, so
\[
    \frac{n}{\sqrt{k! \ell!}}
    \leq
    \frac{n}{\sqrt{\max(k,\ell)!}}
    \leq
    \frac{n}{(\sqrt{m}/8)!},    
\]
and since~$m/n$ stays in a compact interval of~$\reals_{>0}$,
this last term tends to~$0$ with~$n$.
We then conclude
\[
    n \frac{|1-a_{n,m,2k+\ell}|}{k! \ell!}
    \leq
    \frac{e^{4k+2\ell}}{\sqrt{k! \ell!}}
\]
for~$n$ large enough, so Inequality~\eqref{eq:bounded_coef} is satisfied.
\end{proof}

\begin{proof}[Proof of Lemma~\ref{th:asympt_marked}]
We start with the expression of $\weight_{\mgdnm}(u,v)$ derived in Lemma~\ref{th:exact_marked}
\[
    \weight_{\mgdnm}(u,v) 
	=
    \frac{(2m)!}{2^m m!}
    [x^{2m}]
    \Bigg(
    \sum_{k,\ell \geq 0}
    a_{n,m,2k+\ell}
    \frac{(u W_{\frac{n}{m}}(x)^2)^k}{k!}
    \frac{(v W_{\frac{n}{m}}(x))^{\ell}}{\ell!}
    \Bigg)
    \set_{\DD}(x)^n.
\]
Using the notation
\[
	A(x) = 
	e^{u W_{\frac{n}{m}}(x)^2 + v W_{\frac{n}{m}}(x)}
	-
	\sum_{k,\ell \geq 0}
	a_{n,m,2k+\ell}
	\frac{(u W_{\frac{n}{m}}(x)^2)^k}{k!}
	\frac{(v W_{\frac{n}{m}}(x))^{\ell}}{\ell!},
\]
this implies
\[
	\frac{(2m)!}{2^m m!}
    [x^{2m}]
	e^{u W_{\frac{n}{m}}(x)^2 + v W_{\frac{n}{m}}(x)}
	\set_{\DD}(x)^n
	-
	\weight_{\mgdnm}(u,v)
	=
	\frac{(2m)!}{2^m m!}
    [x^{2m}]
	A(x)
	\set_{\DD}(x)^n.
\]
Observe that~$W_{\frac{n}{m}}(x)$ has valuation~$0$ and period~$p$.
According to Lemma~\ref{th:large_power}, we have
\[
	\frac{(2m)!}{2^m m!}
    [x^{2m}]
	e^{u W_{\frac{n}{m}}(x)^2 + v W_{\frac{n}{m}}(x)}
	\set_{\DD}(x)^n
	=
	\frac{(2m)!}{2^m m!}
	\frac{\set_{\DD}(\zeta)^n}{\zeta^{2m}}
	\frac{p}{\sqrt{2 \pi n \zeta \phi'(\zeta)}}
    e^{u W_{\frac{n}{m}}(\zeta)^2 + v W_{\frac{n}{m}}(\zeta)}
	(1 + \bigO(n^{-1})),
\]
so the demonstration is complete if we prove
\[
	\frac{(2m)!}{2^m m!}
    [x^{2m}]
	A(x)
	\set_{\DD}(x)^n
	=
	\frac{(2m)!}{2^m m!}
	\bigO \left(
	n^{-1}
	\frac{\set_{\DD}(\zeta)^n}{\zeta^{2m} \sqrt{n}}
	\right).
\]
The Taylor coefficients of $W_{\frac{n}{m}}(x)$ need not be positive,
so we introduce the entire function
\[
    \WW_{\frac{n}{m}}(x) = \sum_{n \geq 0} | [z^n] W_{\frac{n}{m}}(z) | x^n,
\]
which dominate $W_{\frac{n}{m}}(x)$ coefficient-wise.
By application of Lemma~\ref{th:technical}, $\frac{1}{n} C(u \WW_{\frac{n}{m}}(x)^2, v \WW_{\frac{n}{m}}(x))$
dominates coefficient-wise~$A(x)$,
and therefore
\[
	\left|
	\frac{(2m)!}{2^m m!}
    [x^{2m}]
	A(x)
	\set_{\DD}(x)^n
	\right|
	\leq
	\frac{(2m)!}{2^m m!}
    [x^{2m}]
	\frac{1}{n} C(u \WW_{\frac{n}{m}}(x)^2, v \WW_{\frac{n}{m}}(x))
	\set_{\DD}(x)^n.
\]
Finally, according to Lemma~\ref{th:large_power}, we have
\[
	\frac{(2m)!}{2^m m!}
    [x^{2m}]
	\frac{1}{n} C(u \WW_{\frac{n}{m}}(x)^2, v \WW_{\frac{n}{m}}(x))
	\set_{\DD}(x)^n
	=
	\frac{(2m)!}{2^m m!}
	\bigO \left(
	n^{-1} 
	\frac{\set_{\DD}(\zeta)^n}{\zeta^{2m} \sqrt{n}}
	\right).
\]
\end{proof}

\begin{proof}[Proof of Lemma~\ref{th:mandatory_edges}]
Let $\tilde{R}$ denote the set of multigraphs from $\mgdnm$
with~$j$ distinguished mandatory edges
\[
	e_1 = \{v_1, v_2\}, \ldots, e_j = \{v_{2j-1},v_{2j}\}.
\]
Given an ordering of a multigraph from~$R$,
we can distinguish the first occurrences of the mandatory edges,
in order to obtain the ordering of a multigraph from~$\tilde{R}$.
Therefore, the number of orderings of multigraphs from~$R$
is at most equal to the number of orderings of multigraphs from~$\tilde{R}$.
Dividing by~$2^m m!$, this implies
\[
    \sum_{G \in R} \kappa(G)
    \leq
    \sum_{G \in \tilde{R}} \kappa(G),
\]
so~$\marked_{R}(1,1) \leq \marked_{\tilde{R}}(1,1)$.

Let~$W$ denote the fixed set of vertices that appear in the mandatory edges,
and for all~$w \in W$, let $d_w$ denote the number of occurrences of the vertex~$w$
in the mandatory edges
\[
	d_w = \big| \{i\ | v_i = w\} \big|.
\]
Let also $G^{(d)}_{n,m}$ denote the set of multigraphs
with~$n$ vertices and $m$ edges,
where each vertex~$w$ from the mandatory edges has degree in $\DD - d_w$
and the other vertices have degrees in~$\DD$.
To construct an ordering from a multigraph in~$\marked_{\tilde{R}}$,
we choose the~$j$ positions of the mandatory edges among the~$m$ positions available, 
the order of the vertices in those edges,
and mark or not each of them.
Then the rest of the ordering is filled
with an ordering from $\marked_{G^{(d)}_{n,m-j}}$.
Therefore, the number of orderings from $\marked_{\tilde{R}}$ is at most
\[
	m^j 2^j 2^j
	2^{m-j} (m-j)!
	\marked_{G^{(d)}_{n,m-j}}(1,1).
\]
Dividing by $2^m m!$ and using the fact that~$j$ is fixed, we obtain
%and the $(1,1)$-weight of $\tilde{R}$ is smaller than or equal to
\begin{equation} \label{eq:tildeR}
	\weight_{\tilde{R}}(1,1)
	=
	\bigO \left( \marked_{G^{(d)}_{n,m-j}}(1,1) \right).
\end{equation}
Following the steps of Lemma~\ref{th:exact_marked},
$\marked_{G^{(d)}_{n,m-j}}(1,1)$ is smaller than or equal to
\[
	\frac{(2m-2j)!}{2^{m-j} (m-j)!}
	[x^{2m-2j}]
	\Bigg(
	\sum_{k,\ell \geq 0}
	a_{n,m-j,2k+\ell}
	\frac{W_{\frac{n}{m}}(x)^{2k+\ell}}{k!\ell!}
	\Bigg)
	\Bigg(
	\prod_{v \in W}
	\set_{\DD-d_v}(x)
	\Bigg)
	\set_{\DD}(x)^{n-|W|}.
\]
An application of the same argument as in the proof of Lemma~\ref{th:asympt_marked} leads to
\[
	\weight_{G^{(d)}_{\DD}(n, m - j)}(1,1)
	=
	\frac{(2m-2j)!}{2^{m-j} (m-j)!}
	\bigO
	\left(
	\frac{\set_{\DD}(\zeta)^{n-|W|}}{\zeta^{2(m-j)} \sqrt{n-|W|}}
	\right).
\]
Since~$|W|$ and~$j$ are fixed, this implies, using Lemma~\ref{th:asympt_marked},
\[
	\marked_{G^{(d)}_{n,m-j}}(1,1)
	=
	\frac{(2m-2j)!}{2^{m-j} (m-j)!}
	\frac{2^m m!}{(2m)!}
	\bigO
	\left(
	\marked_{\mgdnm}(1,1)
	\right).
\]
Simplifying and injecting this relation from Equation~\eqref{eq:tildeR}, we obtain
\[
	\marked_{\tilde{R}}(1,1)
	=
	\bigO
	\big(
	n^{-j}
	\marked_{\mgdnm}(1,1)
	\big).
\]
\end{proof}

\begin{proof}[Proof of Lemma~\ref{th:large_excess}]
By definition, a multigraph~$G$ belongs to~$\zeromgdnm$ if and only if it contains a vertex~$v$
that is in one of the following configurations:
\begin{enumerate}
\item
the loop \{v,v\} appears at least twice in $E(G)$,
\item
there is a vertex~$u$ such that the edge \{u,v\} appears at least three times,
\item
there is a vertex~$u$ such that \{v,v\} is in~$E(G)$ and \{u,v\} appears at least twice,
\item
there are vertices~$u$ and~$w$ such that \{u,v\} and \{v,w\} both appear at least twice.
\end{enumerate}
Let~$\tilde{R}_1$ (resp. $\tilde{R}_2$, $\tilde{R}_3$, $\tilde{R}_4$)
denote the set of multigraphs from~$\mgdnm$ that contain a vertex in configuration~$1$
(resp.~$2$, $3$, $4$). We then have
\[
    \zeromgdnm = \tilde{R}_1 \cup \tilde{R}_2 \cup \tilde{R}_3 \cup \tilde{R}_4.
\]
Let also $R_1$, $R_2$, $R_3$ and $R_4$ denote four subsets of $\mgdnm$, such that
\begin{enumerate}
\item
the multigraphs from $R_1$ contain
two occurrences of the loop $\{1,1\}$,
\item
the multigraphs from $R_2$ contain
three occurrences of the edge $\{1,2\}$,
\item
the multigraphs from $R_3$ contain
an occurrence of $\{1,1\}$
and two occurrences of $\{1,2\}$,
\item
the multigraphs from $R_4$ contain
two occurrences of $\{1,2\}$
and two occurrences of $\{1,3\}$
\end{enumerate}
(see Figure~\ref{fig:nonstar}).
Given the symmetric roles of the vertices,
the number of orderings from multigraphs in~$\tilde{R}_1$
(resp. $\tilde{R}_2$, $\tilde{R}_3$, $\tilde{R}_4$)
is lesser than or equal to~$n$ times (resp. $n^2$, $n^2$, $n^3$)
the number of orderings from multigraphs in~$R_1$ (resp. $R_2$, $R_3$, $R_4$).
This implies
\begin{align*}
    \marked_{\tilde{R}_1}(1,1) &\leq n \marked_{R_1}(1,1),\\
    \marked_{\tilde{R}_2}(1,1) &\leq n^2 \marked_{R_2}(1,1),\\
    \marked_{\tilde{R}_3}(1,1) &\leq n^2 \marked_{R_3}(1,1),\\
    \marked_{\tilde{R}_4}(1,1) &\leq n^3 \marked_{R_4}(1,1),
\end{align*}
so
\[
	\marked_{\zeromgdnm}(1,1)
	\leq
	n \marked_{R_1}(1,1)
	+
	n^2 \marked_{R_2}(1,1)
	+
	n^2 \marked_{R_3}(1,1)
	+
	n^3 \marked_{R_4}(1,1).
\]
The multigraphs from~$R_1$ (resp. $R_2$, $R_3$, $R_4$)
have $2$ mandatory edges (resp. $3$, $3$, $4$).
Four applications of Lemma~\ref{th:mandatory_edges} lead to
\[
	\marked_{\zeromgdnm}(1,1)
	= 
	\bigO(n^{-1})
	\weight_{\mgdnm}(1,1).
\]
Finally, according to Lemma~\ref{th:asympt_marked},
\[
	\weight_{\mgdnm}(1,1)
	=
	\bigO \left( \weight_{\mgdnm}(-1,-1) \right).
\]
\end{proof}

\end{document}